\documentclass[11pt,final]{article}
\usepackage{amsmath,amsthm,amsfonts,amssymb,graphicx,enumerate,psfrag}
\usepackage{mathrsfs}
\usepackage{fullpage}

\usepackage[colorlinks,citecolor=blue,urlcolor=blue]{hyperref}
\usepackage[utf8]{inputenc} 
\newtheorem{lemma}{Lemma}
\newtheorem{theorem}{Theorem}
\newtheorem{example}{Example}
\newtheorem{corollary}{Corollary}
\newtheorem{remark}{Remark}
\newtheorem{question}{Question}
\newcommand{\dN}{\mathbb {N}}
\newcommand{\dZ}{\mathbb {Z}}
\newcommand{\dR}{\mathbb {R}}
\newcommand{\lip}{\textsc{lip}}
\newcommand{\trel}{{\rm t}_{\textsc{rel}}}
\newcommand{\tmix}{{\rm t}_{\textsc{mix}}}
\newcommand{\dist}{\mathrm{dist}}
\newcommand{\EE}{{\mathbb{E}}}
\newcommand{\PP}{{\mathbb{P}}}
\newcommand{\cP}{\mathcal{P}}
\newcommand{\cF}{\mathcal {F}}
\newcommand{\dtv}{d_{\textsc{tv}}}
\newcommand{\tv}{\textsc{tv}}
\newcommand{\pmin}{P_{\mathrm{min}}}
\newcommand{\diam}{\mathrm{diam}}
\title{Mixing time and expansion of non-negatively curved Markov chains}
\author{Florentin Münch\footnote{Max Planck Institute for Mathematics in the Sciences, Leipzig, muench@mis.mpg.de}\ \ \& Justin Salez\footnote{Université Paris-Dauphine \& PSL, CEREMADE, Paris, justin.salez@dauphine.psl.eu}}
\begin{document}
\maketitle
\begin{abstract}
We establish  three remarkable consequences of non-negative curvature for sparse Markov chains. First, their conductance decreases logarithmically with  the number of states. Second, their displacement is at least diffusive until the mixing time. Third, they never exhibit the  cutoff phenomenon. The first result provides a nearly sharp quantitative answer to a classical question of Ollivier, Milman \& Naor. The second settles a conjecture of Lee and Peres for graphs with non-negative curvature. The third offers a striking counterpoint to the recently established cutoff for non-negatively curved chains with uniform expansion.
\end{abstract}
\section{Introduction}

In Riemaniann geometry, a lower bound  on the Ricci curvature classically  implies an array of powerful estimates for the underlying manifold, including diameter bounds, volume growth, comparison principles, splitting theorems, spectral estimates, and concentration inequalities  \cite{jost2017a}. Over the past decade, those remarkable implications have motivated the development of non-smooth analogues of curvature that can be applied to discrete geometries 
\cite{schmuckenschlager1998curvature,lin2010ricci,
forman2003bochner,erbar2012ricci,ollivier2009ricci,
jost2021characterizations,najman2017modern, munch2017ollivier}.
In particular, Ollivier   \cite{ollivier2009ricci} proposed a transportation-based definition  that makes sense on arbitrary metric spaces, hence in particular on graphs and Markov chains. 
Informally, a metric space has non-negative Ollivier-Ricci curvature if balls are at least as close to each other as their centers are.  The simplest example of a finite non-negatively curved graph is  a cycle. It is classical that this graph has poor expansion,  that the random walk on it exhibits a diffusive behavior, and that its mixing time is of the same order as the inverse  spectral gap. The aim of the present paper is to show that those three properties are in fact shared by all sparse Markov chains with non-negative curvature. Before we state our results in full generality,  let us describe their content in the simple but important special case of random walk on graphs.

\paragraph{Non-negatively curved graphs.}Let  $G=(V,E)$ be a finite simple graph, and let $P$ denote the random-walk transition matrix of $G$. Thus,  $P$ acts on any function $f\colon V\to\dR$ as follows: 
\begin{eqnarray*}
(Pf)(x) & = & \frac{1}{\deg(x)}\sum_{y\sim x}f(y),
\end{eqnarray*}
where the notation  $y\sim x$ indicates that $\{x,y\}\in E$.   Following Ollivier \cite{ollivier2009ricci,ollivier2010survey}, we say that $G$ has \emph{non-negative curvature} if $P$ contracts the Lipschitz norm $\|f\|_{\lip}:=\max_{y\sim x }{|f(y)-f(x)|}$, i.e. 
\begin{eqnarray*}
\|Pf\|_{\lip} & \le & \|f\|_{\lip}.
\end{eqnarray*}
This fundamental property is satisfied by many natural families of graphs, including all Abelian Cayley graphs and, more generally, all Cayley graphs whose generating set is conjugacy-invariant. 
Additional details,  including a more effective formulation in terms of couplings, will be provided in the next section  when we discuss curvature for general Markov chains.

\paragraph{Expansion.} Our first result concerns the expansion of non-negatively curved graphs.  Write $\partial A$ for the edge-boundary of a set $A\subseteq V$, and $\deg(A)$  for the sum of the degrees of all vertices in $A$. With this notation, the \emph{conductance} (also known as Cheeger constant, or bottleneck ratio) of $G$ is 
\begin{eqnarray*}
\Phi  & := & \min\left\{\frac{|\partial A|}{\deg(A)}\colon A\subseteq V,0<\deg(A)\le |E|\right\}.
\end{eqnarray*}
Sequences of bounded-degree graphs whose size diverges but whose conductance remains bounded away from zero are famously known as \emph{expanders}. Whether such graphs can have non-negative curvature is a natural question, first raised by Milman and Naor and then popularized in a survey by Ollivier \cite[Problem T]{ollivier2010survey}. The problem has remained open until very recently, when a negative answer was given by the second author  \cite{salez2021sparse}. Specifically, the latter used the notion of entropy for graph limits to prove that non-negative curvature and expansion are incompatible “at infinity”, and the conclusion was then  transferred to  finite graphs using a compactness argument. A clear drawback of this approach is its non-quantitative nature. In particular, the second author asked for a direct, quantitative relation between volume, degree and expansion on non-negatively curved graphs. This is precisely the content of our first main result. 
\begin{theorem}[Poor expansion]\label{th:nonexp}If $G$ has non-negative curvature, then 
\begin{eqnarray*}
\Phi & \le & c\sqrt{\frac{d\log d}{\log n}},
\end{eqnarray*}
where $n$ denotes the number of vertices of $G$, $d$ the maximum degree, and $c$ a universal constant.
\end{theorem}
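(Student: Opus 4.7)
The plan is to combine a diffusive upper bound on the walk's displacement (which comes from non-negative curvature) with the necessity of traversing the graph's diameter in order to mix, and to convert the resulting mixing-time lower bound into an upper bound on $\Phi$ via Cheeger's inequality.

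The first and most crucial step is to establish a quadratic-displacement estimate of the form
$$\EE_x\bigl[d(x,X_t)^2\bigr] \;\le\; C\,d\,t \qquad (t\ge 0,\ x \in V),$$
where $(X_t)$ is the walk started at $x$. The natural route is through the carré du champ $\Gamma(f)(y) := \tfrac12\sum_z P(y,z)(f(z)-f(y))^2$, which is bounded by $1/2$ pointwise on $1$-Lipschitz functions. Since non-negative curvature propagates the Lipschitz norm, applying this to $y\mapsto d(x,y)$ and telescoping via a Bochner-type identity should yield linear-in-$t$ variance growth, with the factor $d$ absorbing the uniform Dirichlet constant.

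Given this diffusive bound, the remainder is direct. For endpoints $x_0,y_0$ of a diameter-realising geodesic, write $\mu := P^{\tmix}(x_0,\cdot)$ and $\nu := P^{\tmix}(y_0,\cdot)$; the triangle inequality for $W_1$ yields
$$\diam \;\le\; W_1(\delta_{x_0},\mu) + W_1(\mu,\pi) + W_1(\pi,\nu) + W_1(\nu,\delta_{y_0}).$$
By definition of $\tmix$ the two inner terms are at most a small constant multiple of $\diam$, whereas each outer term is at most $\sqrt{Cd\,\tmix}$ by Jensen and the displacement bound. Thus $\tmix \gtrsim \diam^2/d$. Combining with the classical bound $\tmix \le C\log(n)/\Phi^2$ (from Cheeger and the trivial $\pmin \ge 1/(nd)$) gives $\Phi^2 \lesssim d\log n/\diam^2$, and finally the elementary volume inequality $n \le d^{\diam}$, i.e.\ $\diam \ge \log n/\log d$, yields $\Phi^2 \lesssim d(\log d)^2/\log n$, matching the stated estimate up to a $\sqrt{\log d}$ factor, presumably saved in the paper by a sharper carré-du-champ identity or a finer Cheeger-type conversion under curvature.

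The principal obstacle is the displacement estimate in the second paragraph: non-negative Ollivier curvature is a \emph{first}-moment (Lipschitz) statement, and upgrading it to a \emph{second}-moment/variance bound requires a genuine extra ingredient, typically a discrete Bochner identity or a martingale-type $L^2$ iteration of the Wasserstein contraction. Extracting the sharp dependence on $d$ and on $\log d$ is where the most delicate bookkeeping will be needed.
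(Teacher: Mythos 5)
Your plan rests entirely on the second-moment displacement bound $\EE_x[\dist(x,X_t)^2] \le Cdt$, and the carré-du-champ sketch you offer for it does not close. Writing $f=\dist(x,\cdot)$, the pointwise identity $(Pf^2-f^2)(y)=2\Gamma(f)(y)+2f(y)\bigl((Pf)(y)-f(y)\bigr)$ shows that the Dirichlet term $\Gamma(f)\le 1/2$ controls only the martingale part of the telescope; the drift term $2f\,(Pf-f)$ is left over, and non-negative Ollivier curvature gives $\|Pf\|_\lip\le\|f\|_\lip$ but says nothing about the sign or size of $Pf-f$. Indeed $(Pf)(x)=\EE_x[\dist(x,X_1)]>0=f(x)$ at the base point whenever $x$ is non-absorbing, so the drift is genuinely present. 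A cleaner martingale telescope (using $M_s=(P^{t-s}f)(X_s)$, which is legitimate because curvature keeps $P^kf$ $1$-Lipschitz) does give $\mathrm{Var}_x\bigl(\dist(x,X_t)\bigr)\le t$, but the mean $\EE_x[\dist(x,X_t)]=(P^tf)(x)$ is left completely uncontrolled: the only bound available from the contraction is the linear one $(P^tf)(x)\le t$. So the second moment is not bounded by the route you describe, and this is a missing theorem rather than bookkeeping. Even granting the estimate, your chain $\tmix\gtrsim\diam^2/d$, $\tmix\lesssim \log n/\Phi^2$, $\diam\gtrsim\log n/\log d$ lands on $\Phi\lesssim \log d\,\sqrt{d/\log n}$, a $\sqrt{\log d}$ short of the claim, as you noted.

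The paper's route is genuinely different and sidesteps displacement upper bounds altogether. Curvature is exploited via coupling: the two-point process $\dist(X_t,Y_t)$ is made an $\dN$-valued supermartingale whose increment is non-zero with probability at least $\pmin$ while positive, and a tailored hitting-time estimate (Lemma \ref{lm:super}) yields the diffusive total-variation decay $\|P^t(x,\cdot)-P^t(y,\cdot)\|_\tv\le \dist(x,y)\sqrt{10/((t+1)\pmin)}$ (Theorem \ref{th:tvdecay}). This is combined with an $L^1$ form of Cheeger's inequality (Lemma \ref{lm:L1}) to give $\tmix^\sharp\le 40/(\pmin\Phi^2)$ (Corollary \ref{co:conductance}), which is much stronger for sparse chains than the classical $\tmix\lesssim\log n/\Phi^2$ you invoke; the one-step displacement that appears in Theorem \ref{th:main} is taken at $t=1$, where it is trivially $\le 1$. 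Theorem \ref{th:nonexp} then follows by pairing $\tmix^\sharp\le 40d/\Phi^2$ with the elementary combinatorial lower bound $\tmix^\sharp\gtrsim \log n/\log d$ (the support of $P^t(x,\cdot)$ has at most $O(d^t)$ vertices), not with a Wasserstein triangle inequality through the diameter, and this is precisely where the $\sqrt{\log d}$ you were missing comes from.
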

In other words, large graphs can not simultaneously enjoy non-negative curvature and uniform expansion unless their maximum degree grows at least like $\log n/\log\log n$. We note that this is sharp up to the $\log\log n$ correction. Indeed, a celebrated result of Alon and Roichman asserts that random  Cayley graphs with logarithmic degrees have uniform expansion with high probability \cite{MR1262979}, and non-negative curvature can be enforced by specializing this result to Abelian groups.

\paragraph{Mixing times.} Our second result is a complete determination of the order of magnitude of the mixing time of all vertex-transitive graphs with bounded degrees and non-negative curvature. Suppose that $G$ is vertex-transitive, with degree $d$ and volume $n$. Fix an arbitrary origin $x\in V$ (the choice is irrelevant, by transitivity), and consider the lazy simple random walk on $G$ started at $x$, i.e. the Markov chain $(X_t)_{t\ge 0}$ on $V$ with initial condition $X_0=x$ and transition matrix $(P+I)/2$. The \emph{mixing time} of $G$ is a fundamental graph-theoretical parameter, defined as follows \cite{MR3726904}:
\begin{eqnarray*}
\tmix & := & \min\left\{t\in\dN\colon \max_{A\subseteq V}\left|\PP(X_t\in A)-\frac{|A|}{n}\right|\le \frac 14\right\}.
\end{eqnarray*}
An important, closely related quantity is the so-called \emph{relaxation time} (or inverse spectral gap)
\begin{eqnarray*}
\trel & := & \frac{1}{1-\lambda_{2}},
\end{eqnarray*}
where $1=\lambda_1>\lambda_2\ge\ldots\ge\lambda_n$ denote the ordered eigenvalues of $P$. It is classical that $\tmix \ge \trel$,  but this inequality can be off by a factor as large as $\log n$  (this is the case, e.g., for expanders).  

\begin{theorem}[Mixing times]\label{th:graphmix}All vertex-transitive graphs with non-negative curvature satisfy
\begin{eqnarray*}
\tmix \ \asymp_d \ \trel \ \asymp_d \ \frac{1}{\Phi^2},
\end{eqnarray*} 
where the notation $a\asymp_d b$ means that the ratio $a/b$ is bounded from above and  below by positive constants that depend only on the degree $d$. 
\end{theorem}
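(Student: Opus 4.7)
The plan is to reduce the claimed equivalence to two substantive inequalities and to import each from other parts of the paper. The bound $\trel \le 2/\Phi^2$ is the hard direction of the classical Cheeger inequality, and the bound $\tmix \ge \trel$ is standard; both hold for every graph with no curvature assumption. The real content of Theorem~\ref{th:graphmix} therefore lies in the two reverse estimates
$\tmix \le C_d \trel$ and $\trel \ge c_d/\Phi^2$.

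For the first reverse estimate $\tmix \le C_d \trel$---absence of cutoff up to $d$-dependent factors---I would invoke the paper's general no-cutoff theorem for non-negatively curved Markov chains announced in the abstract. Specialized to the vertex-transitive setting, where the stationary distribution is uniform, this gives the desired bound with constants depending only on the degree $d$.

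For the reverse Cheeger estimate $\trel \ge c_d/\Phi^2$, my plan is to pass through the graph diameter $D$ and establish the two matching bounds $\trel \ge c_d D^2$ and $\Phi \le C_d/D$. The first is a Poincaré-type lower bound obtained by testing the Rayleigh quotient defining $\trel$ against the $1$-Lipschitz function $f(y) = \dist(y,x_0)$: its Dirichlet form is at most $1/2$, while vertex-transitivity together with polynomial volume growth (see below) forces $\mathrm{Var}_\pi(f) \asymp_d D^2$. The second is a Bishop-Gromov-type isoperimetric bound: a ball-ratio estimate of the form $|B(x,r)|/|B(x,r-1)| \le 1 + C_d/r$ implies that testing the Cheeger quotient on $A = B(x_0,\lfloor D/3\rfloor)$ yields $|\partial A|/\deg(A) \le C_d/D$.

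The main obstacle is precisely this discrete Bishop-Gromov ball-ratio estimate, which must essentially use both non-negative curvature and vertex-transitivity. It is the single genuinely new ingredient of the proof---it drives both the variance computation (via the induced polynomial volume growth) and the isoperimetric bound on $\Phi$---and it likely emerges from the same coupling-based technology used to prove Theorem~\ref{th:nonexp} and the paper's diffusive-displacement result.
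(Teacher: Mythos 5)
Your decomposition into two classical inequalities ($\trel\le 2/\Phi^2$ and $\tmix\ge\trel$) plus two curvature-dependent ones ($\tmix\lesssim_d\trel$ and $\trel\gtrsim_d 1/\Phi^2$) is sound, but both of your curvature-dependent steps have problems.

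For $\tmix\le C_d\trel$ you propose to ``invoke the paper's general no-cutoff theorem.'' This is circular: in the paper, the no-cutoff statement (Corollary~\ref{co:cutoff}) is not a freestanding result; it is \emph{derived} by combining the conductance bound $\tmix^\sharp\le 40/(\pmin\Phi^2)$ (Corollary~\ref{co:conductance}, the genuinely new estimate, which follows from Theorem~\ref{th:main} at $t=1$) with the pre-existing Buser inequality $\trel\ge\pmin/(12\Phi^2)$ of \cite{munch2019non}, plus $\tmix=\tmix^\sharp$ for transitive chains. You never establish, nor even cite, the upper bound $\tmix\lesssim 1/(\pmin\Phi^2)$, which is the real content that makes $\tmix\lesssim_d\trel$ true, so this step is missing rather than proved.

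For $\trel\ge c_d/\Phi^2$, the paper simply quotes the Buser inequality, which already covers this for any non-negatively curved reversible chain with $\pmin$ bounded below; there is no need to pass through the diameter. Your alternative route via a discrete Bishop--Gromov ball-ratio estimate $|B(x,r)|/|B(x,r-1)|\le 1+C_d/r$ and the isoperimetric consequence $\Phi\le C_d/D$ is substantially stronger than anything the paper establishes (the paper's Corollary~\ref{co:expansion} only gives $\Phi\lesssim 1/\sqrt{\pmin\,\diam^\sharp}$, i.e., decay like $D^{-1/2}$, not $D^{-1}$), and you acknowledge you do not know how to prove it. So this step is a genuine gap too. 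To repair the argument along the paper's lines: prove Theorem~\ref{th:main} via the coupling/supermartingale machinery (Theorem~\ref{th:tvdecay} and Lemma~\ref{lm:L1}), specialize to $t=1$ to get $\tmix^\sharp\le 40/(\pmin\Phi^2)$, use transitivity to identify $\tmix$ with $\tmix^\sharp$, and pair this with the Buser inequality and the two classical inequalities you already listed.
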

This has the following remarkable consequence. For a sequence of graphs $(G_n)_{n\ge 1}$, the condition 
\begin{eqnarray*}
\frac{\tmix(G_n)}{\trel(G_n)} & \xrightarrow[n\to\infty]{} & +\infty
\end{eqnarray*}
is known as the \emph{product condition}. It is necessary (and conjecturally also sufficient, at least for vertex-transitive graphs) for the occurrence of the so-called \emph{cutoff phenomenon}, a celebrated but still mysterious phase transition in the approach to equilibrium of certain Markov chains \cite{MR1374011,MR3726904}. Thus, Theorem \ref{th:graphmix} implies that  vertex-transitive graphs with fixed degree and non-negative curvature never exhibit cutoff. This stands in stark contrast with  recent results due to the second author, showing that many non-negatively curved graphs with logarithmic degree do exhibit cutoff  \cite{salez2021cutoff}. Interestingly, the conclusion of Theorem \ref{th:graphmix} is known to hold for  fixed-degree Cayley graphs of \emph{moderate growth} \cite{MR1254308}. This geometric condition was later shown to be equivalent to the much simpler requirement that the diameter is algebraically large in the volume \cite{MR3439705} (see the recent paper \cite{MR4253426} for an extension to vertex-transitive graphs). This raises the following  question. A positive answer would be surprisingly strong, but we have not been able to produce any counter-example.
\begin{question}[Moderate growth?]Do all non-negatively curved graphs with degree at most $d$ satisfy
\begin{eqnarray*}
\diam(G) & \ge & \varepsilon_d n^{\varepsilon_d},
\end{eqnarray*}
where $n$ is the number of vertices, and $\varepsilon_d>0$  a  constant depending only on $d$?
\end{question}

Indeed, this question was answered affirmatively in case of a modified Barky Emery curvature dimension condition in \cite{bauer2015li}, and later by the first author in case of the weaker, unmodified Bakry Emery curvature dimension condition \cite{munch2019li}.

\paragraph{Diffusivity.}Finally, our last result concerns the speed of random walk on vertex-transitive graphs with non-negative curvature. Many infinite graphs such as the line $\dZ$ are known to exhibit a \emph{diffusive} behavior, in the sense that the typical graph distance between $X_t$ and $X_0$ grows like $\sqrt{t}$. On a finite graph, the distance to the starting point can of course no longer grow indefinitely with time, but one may still hope for a diffusive behavior on \emph{appropriate time-scales}. This vague statement was recently given a powerful rigorous content by Lee and Peres \cite{MR3127886}, who showed that the simple random walk  on any finite vertex-transitive graph satisfies the diffusive lower-bound
\begin{eqnarray*}
 \EE[\dist(X_0,X_t)] & \ge & c\sqrt{\frac{t}{d}}
\end{eqnarray*}  for all $t\in[d,\trel]$, where $c>0$ is a universal constant. The graph $\dZ_2^{d}\times \dZ_n$ shows that this lower-bound is sharp. However, the authors conjectured that the time-scale on which the diffusive behavior remains valid should actually be much longer, namely, of order $\tmix$ \cite[Conjecture 2.5]{MR3127886}. 
Our second result confirms this prediction in the case of non-negatively curved graphs.
\begin{theorem}[Diffusive lower-bound]\label{th:speed}If $G$ is vertex-transitive and non-negatively curved, then
\begin{eqnarray*}
 \EE[\dist(X_0,X_t)] & \ge & c\sqrt{\frac{t}{d}},
\end{eqnarray*} 
for $t\in[d,\tmix]$, where $c$ is a universal constant.
\end{theorem}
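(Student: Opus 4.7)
My plan is to combine the Lee-Peres diffusive bound on the shorter window $t\in[d,\trel]$, which holds for every vertex-transitive graph \cite{MR3127886} without any curvature hypothesis, with Theorem~\ref{th:graphmix}, which under non-negative curvature collapses $\tmix$ to the order of $\trel$. In this way only a bounded extension of Lee-Peres's time range remains; the challenge is to perform this extension while preserving a truly \emph{universal} constant $c$ in the final bound.

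\paragraph{Spectral / second-moment approach.} I would run a second-moment variant of Lee-Peres's argument on the full interval $[d,\tmix]$. Pick a $1$-Lipschitz test function adapted to the Cheeger-optimal cut, for instance $f(\cdot):=\min\bigl(\dist(\cdot,A),R\bigr)$ with $A$ achieving the conductance $\Phi$ and $R\sim 1/\Phi$, and decompose the two-point correlation
\[
\EE\bigl[(f(X_t)-f(X_0))^2\bigr]\ =\ 2\sum_{i\ge 2}\hat f_i^{\,2}\bigl(1-\lambda_i^t\bigr)
\]
in the spectral basis of $P$. Using the Cheeger-tight bound $\trel\asymp_d 1/\Phi^2$ implicit in Theorem~\ref{th:graphmix}, together with the fact that $f$ concentrates most of its spectral mass near the second eigenspace, the right-hand side should remain of order $t/d$ throughout $[d,\tmix]$. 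Since $|f(X_0)-f(X_t)|\le\dist(X_0,X_t)$, this produces the second-moment lower bound $\EE[\dist(X_0,X_t)^2]\gtrsim t/d$.

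\paragraph{From $L^2$ to $L^1$, and the main obstacle.} The crucial remaining step is to upgrade the second-moment estimate to a first-moment bound, which requires concentration of $\dist(X_0,X_t)$ around its mean in the form $\mathrm{Var}(\dist(X_0,X_t))\lesssim\EE[\dist(X_0,X_t)]^2$. Non-negative curvature supplies exactly the right tool: by Ollivier's transport inequality, the $1$-Lipschitz function $\rho(\cdot):=\dist(X_0,\cdot)$ stays $1$-Lipschitz under the semigroup $P^t$, and hence concentrates around its mean on the scale of the spectral gap. The main technical challenge is to propagate all constants uniformly in $d$: a naive composition of Lee-Peres's bound at $t=\trel$ with Theorem~\ref{th:graphmix} would absorb a $\sqrt{C_d}$ factor and spoil universality. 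The concentration estimate must therefore be carried out in tandem with the spectral bound, carefully balancing curvature-based concentration against the Cheeger-tight inequality so that the final constant is genuinely degree-free. This delicate constant-tracking is, in my view, the heart of the proof; the rest is essentially assembled from the toolkit already established for Theorems~\ref{th:nonexp} and~\ref{th:graphmix}.
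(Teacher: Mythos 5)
Your overall strategy is in the right family: cover $[d,\trel]$ by Lee--Peres (a universal constant, no curvature needed), then handle $[\trel,\tmix]$ separately. You also put your finger on the key danger: any route through Theorem~\ref{th:graphmix} carries $d$-dependent implicit constants. The trouble is that the fix you sketch does not escape this; as written, the argument is circular.

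Concretely, the second-moment identity $\EE[(f(X_t)-f(X_0))^2]=2\sum_{i\ge 2}\hat f_i^2(1-\lambda_i^t)$ saturates at $2\,\mathrm{Var}_\pi(f)$ once $t\gtrsim\trel$. With $f=\min(\dist(\cdot,A),R)$ and $R\sim 1/\Phi$, that ceiling is of order $1/\Phi^2$. To declare it still $\gtrsim t/d$ for $t$ up to $\tmix$ you need $\tmix\lesssim d/\Phi^2$ with a \emph{universal} constant, and that is precisely Corollary~\ref{co:conductance} --- the genuinely new content of the paper, not something you can extract from Theorem~\ref{th:graphmix} (whose constants degrade with $d$). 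The curvature-based concentration step, whatever form it takes, only compares $\EE[\dist(X_0,X_t)]$ to its own fluctuations; it cannot repair a $d$-dependent loss already baked into the spectral ceiling. So your diagnosis of the obstacle is correct, but the proposed resolution does not remove it.

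The paper does not run a spectral extension of Lee--Peres at all. On $[\trel,\tmix]$ it applies Corollary~\ref{co:escape} directly: for any $t\ge\trel$ one has $\EE[\dist(X_0,X_t)]\ge\sqrt{\tmix^\sharp\,\pmin}/41$, and since $\tmix^\sharp=\tmix\ge t$ for a transitive chain this gives $\gtrsim\sqrt{t\,\pmin}\gtrsim\sqrt{t/d}$ with a universal constant. Corollary~\ref{co:escape} is a consequence of the Main estimate (Theorem~\ref{th:main}), whose crucial feature is that it is phrased in terms of the $t$-step conductance $\Phi(P^t)$, which for $t\ge\trel$ is bounded below by the universal constant $(1-e^{-1})/2$ --- there is no detour through $\Phi$ and $\trel$, and hence no $d$-dependent factor. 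That Main estimate, proved via the $L^1$ Cheeger inequality (Lemma~\ref{lm:L1}) and the diffusive total-variation decay (Theorem~\ref{th:tvdecay}), is the ingredient your proposal is missing.
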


We emphasize that our estimates are not restricted to simple random walks on graphs. Analogous results will be stated for general Markov chains with non-negative curvature. In particular, neither reversibility, nor even the symmetry of the support of $P$ are actually required for a version of Theorem \ref{th:nonexp} to hold. 
Ollivier curvature with respect to a directed metric has been explored before in \cite{yamada2016ricci,ozawa2020geometric,ozawa2022heat,
eidi2020ollivierJournal}. However the specific consequences of non-negative curvature seem to be unexplored 
to the best of our knowledge. Our general results are exposed in Section \ref{sec:main} below, and are proved in Section \ref{sec:proofs}.

\section{Main results}
\label{sec:main}

In the remainder of the paper, we consider an arbitrary, irreducible stochastic matrix  $P$ on a finite state space $V$.  A natural measure of the ``distance'' from a state $x\in V$ to a state $y\in V$ is the minimum number of transitions needed for the chain to move from $x$ to $y$, namely
\begin{eqnarray*}
\dist(x,y) & := & \min\left\{k\in\dN\colon P^k(x,y)>0\right\}.
\end{eqnarray*}
This quantity is  not necessarily symmetric, but it clearly satisfies the two other axioms of a distance. We may then use optimal transport to extend this notion to probability measures as follows: write  $\cP(V)$ for the set of probability measures on $V$, and define $W\colon \cP(V)\times\cP(V)\to\dR_+$ by
\begin{eqnarray*}
W(\mu,\nu) & := & \inf_{X\sim\mu,Y\sim\nu}
\EE\left[\dist(X,Y)\right],
\end{eqnarray*}
where the infimum runs over all possible random pairs $(X,Y)$ whose marginals are $\mu$ and $\nu$. Again, this quantity is not necessarily symmetric, but it always satisfies the two other axioms of a distance. 
Due to Kantorovich duality \cite[Theorem~5.10 and Particular Case 5.4]{villani2009optimal}, we can write
\[
W(\mu,\nu) = \sup \left\{ \nu f - \mu f : \|f\|_{\lip} \leq 1 \right\}
\]
with 
\[
\|f\|_\lip := \sup_{y\sim x} f(y)-f(x). 
\]
Finally, we say that $P$ has \emph{non-negative curvature} if it is a contraction under $W$, i.e.
\begin{eqnarray}
\label{def:curvature}
\forall \mu,\nu\in\cP(V),\quad
W(\mu P,\nu P) & \le & W(\mu,\nu).
\end{eqnarray}
Due to Kantorovich duality and as in the introduction, non-negative is equivalent to
\[
\|Pf\|_\lip \leq \|f\|_\lip.
\]
For a continuous time version in the directed graph case, see \cite{ozawa2022heat}. 
Indeed, Ollivier curvature with non-symmetric distance has been studied before in \cite{yamada2016ricci,ozawa2020geometric,eidi2020ollivierJournal}.
By convexity, it is in fact sufficient to check property \eqref{def:curvature} on Dirac masses $\mu=\delta_x,\nu=\delta_y$, $x,y\in V$. Moreover, by the triangle inequality, we may further restrict our attention to the case where $y$ is a neighbor of $x$ (by which we mean that $\dist(x,y)=1$ and which we denote by $y\sim x$), i.e.
\begin{eqnarray}
\label{def:local}
\forall y\sim x, \quad
W\left(P(x,\cdot),P(y,\cdot)\right) & \le & 1.
\end{eqnarray}
This local condition is  easily verified in practice. For example, it  holds for  random walks on Abelian groups and, more generally,  random walks with a conjugacy-invariant support, as we now explain. 
\begin{example}[Random walks on groups] Suppose that $V$ is a group, and fix  $\mu\in\cP(V)$. By definition, the \emph{random walk} on $V$ with increment distribution $\mu$ is  the Markov chain whose transitions correspond to left-multiplication by a $\mu-$distributed element, i.e.
$
P(x,y)  :=  \mu(y x^{-1}).
$
This chain has non-negative curvature as soon as the set $S:=\{z\in V\colon\mu(z)>0\}$  is \emph{conjugacy-invariant}, i.e.
\begin{eqnarray}
\label{assume:conjugacy}
\forall z\in V,\quad z S z^{-1} & = &S.
\end{eqnarray}
Indeed, this assumption  implies that $\dist(zx,zy)=\dist(x,y)$ for all $x,y,z\in V$.  In particular, if $Z$ denotes a random variable with law $\mu$, then the ``obvious" coupling  of $P(x,\cdot)$ and $P(y,\cdot)$ given by $X:=Zx$ and $Y:=Zy$ verifies (\ref{def:local}). Note that the condition (\ref{assume:conjugacy}) trivially holds if the group is Abelian. An emblematic non-Abelian example is the transposition walk on the symmetric group \cite{MR3936154}. 
\end{example}

To avoid periodicity issues, we now assume that $P$ is lazy, i.e., $P(x,x)\ge 1/2$ for all $x\in V$. This is more than enough to guarantee that the chain mixes, in the sense that
 \begin{eqnarray*}
\forall x,y\in V,\quad P^t(x,y) & \xrightarrow[t\to\infty]{} & \pi(y),
 \end{eqnarray*}
where $\pi=\pi P$ denotes the unique invariant distribution. Quantifying the speed at which this convergence to equilibrium occurs is a fundamental question, with many applications \cite{MR3726904,MR2341319}. Formally, this amounts to estimating the so-called \emph{mixing time} of the chain:
\begin{eqnarray*}
\tmix & := & \min\left\{t\ge 0\colon \dtv(t)\le \frac{1}{4}\right\}, \quad\textrm{ where }\quad \dtv(t) := \max_{x\in V}\left\|P^t(x,\cdot)-\pi\right\|_{\tv}.
\end{eqnarray*}
Here $\|\mu-\nu\|_{\tv}$ denotes the \emph{total-variation distance} between $\mu,\nu\in\cP(V)$, defined as 
\begin{eqnarray*}
\left\|\mu-\nu\right\|_{\tv}  & = &  \max_{A\subseteq V}|\mu(A)-\nu(A)| \ = \ \frac{1}{2}\sum_{x\in V}|\mu(x)-\nu(x)| \ = \ \inf_{X\sim \mu,Y\sim\nu}\PP(X\ne Y),
\end{eqnarray*}
where the infimum in the last expression runs over all possible couplings $(X,Y)$ of $\mu$ and $\nu$. Thus, a natural way to estimate mixing times is to exhibit good couplings, and this is precisely where curvature enters the play. Indeed, an elementary but crucial reformulation of the non-negative curvature assumption (\ref{def:curvature}) is that the trajectories $(X_t)_{t\ge 0}$ and $(Y_t)_{t\ge 0}$ emanating from any two states $X_0=x$ and $Y_0=y$ can be coupled in such a way that their distance  $t\mapsto \dist(X_t,Y_t)$ forms a super-martingale. When combined with an appropriate diffusive estimate for super-martingales,  this  observation turns out to imply the following $O(1/\sqrt{t})$ decay for the total-variation distance between the laws of $X_t$ and $Y_t$.
\begin{theorem}[Total-variation decay]\label{th:tvdecay}Suppose that $P$ is lazy and non-negatively curved. Then,
\begin{eqnarray*}
\left\|P^t(x,\cdot)-P^t(y,\cdot)\right\|_{\tv} & \le & \dist(x,y)\sqrt{\frac{10\,}{{(t+1)\pmin }}},
\end{eqnarray*} 
for all $x,y\in V$ and all $t\ge 0$, where $\pmin$ denotes the smallest non-zero entry of $P$.
\end{theorem}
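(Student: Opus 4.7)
The plan is to leverage non-negative curvature to construct a coupling under which the inter-chain distance is a super-martingale, and then extract diffusive TV decay by combining this structure with the variance guaranteed by laziness. By \eqref{def:local} and a standard Markov-kernel gluing, I build a coupling $(X_t,Y_t)_{t\ge 0}$ with $X_0=x$, $Y_0=y$, each marginal driven by $P$, and for which $M_t := \dist(X_t,Y_t)$ is a non-negative integer-valued super-martingale with $M_0=D:=\dist(x,y)$ and bounded increments $|M_{t+1}-M_t|\le 2$ (triangle inequality). Gluing the two copies at the coalescence time $\tau:=\inf\{t:X_t=Y_t\}$ ensures $M_t\equiv 0$ on $\{t\ge \tau\}$, and the classical coupling inequality
\[
\|P^t(x,\cdot)-P^t(y,\cdot)\|_{\tv}\ \le\ \PP(X_t\ne Y_t)\ =\ \PP(M_t\ge 1)
\]
reduces the claim to a diffusive bound on this super-martingale.

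The key probabilistic input is a conditional variance lower bound: on $\{M_t\ge 1\}$, one should have $\EE[(M_{t+1}-M_t)^2\mid\mathcal{F}_t]\ge c\,\pmin$ for a universal constant $c$. Laziness endows each marginal with an atom of mass $\ge 1/2$ at its current state while every other transition carries mass $\ge\pmin$, leaving enough slack in the one-step coupling to execute, with conditional probability of order $\pmin$, an asymmetric step in which exactly one of the two chains moves, yielding a change of magnitude $\ge 1$ in $M_t$. Granting this bound, I fix a threshold $K$ and set $\sigma:=\inf\{t:M_t\ge K\}$; Doob's maximal inequality for the non-negative super-martingale $M_t$ gives $\PP(\sigma\le t)\le D/K$. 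On $\{\sigma>t\}$, the process $(K-M_{t\wedge\sigma})^2$ is a sub-martingale whose one-step expected increment dominates the conditional variance (because $\EE[M_{t+1}-M_t\mid\mathcal{F}_t]\le 0$ and $K-M_t\ge 0$); since it stays bounded above by $K^2$, optional stopping gives
\[
c\,\pmin\,\EE[t\wedge\sigma\wedge\tau]\ \le\ K^2-(K-D)^2\ \le\ 2KD.
\]
Markov's inequality then produces $\PP(\sigma>t,\tau>t)\le 2KD/(c\,\pmin\,(t+1))$, and combining with $\PP(\sigma\le t)\le D/K$ and optimizing over $K\asymp\sqrt{\pmin t}$ yields the advertised bound $\PP(M_t\ge 1)\le D\sqrt{10/((t+1)\pmin)}$.

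The delicate step is the variance lower bound. Non-negative curvature by itself permits a coupling in which $M_t$ is constant (zero variance), and laziness alone only constrains marginals; simultaneously securing a single coupling that is distance-contracting in expectation \emph{and} distance-fluctuating with variance of order $\pmin$ requires a careful refinement of the curvature-respecting coupling — for instance by mixing in a small component that freezes one chain while the other takes an ambient transition — together with a verification that the super-martingale property survives this mix. The explicit numerical constant $10$ in the theorem suggests the variance analysis must be carried out with essentially sharp constants rather than just asymptotic order.
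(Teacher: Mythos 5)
Your high-level plan (curvature gives a coupling making $\dist(X_t,Y_t)$ a super-martingale; laziness forces a fluctuation of order $\pmin$; a diffusive martingale estimate then yields the bound) matches the paper's strategy, but two of your load-bearing steps fail in the generality of the theorem. First, the bound $|M_{t+1}-M_t|\le 2$ is \emph{not} available here: the chain is an arbitrary irreducible $P$, so $\dist$ is only a \emph{directed} metric, and the triangle inequality no longer gives $\dist(x',y')\le\dist(x,y)+\dist(x',x)+\dist(y,y')$ in the useful direction (the paper explicitly flags that $\dist(x,x')=\dist(y,y')=1$ does not imply $\dist(x',y')\le\dist(x,y)+2$). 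Your argument via the sub-martingale $(K-M_{t\wedge\sigma\wedge\tau})^2$ crucially uses this bound to conclude $(K-M_{\sigma})^2\lesssim K^2$ at the overshoot time $\sigma$; with unbounded upward jumps that conclusion is lost. The paper sidesteps this by replacing the second-moment argument with an \emph{exponential} super-martingale built from the elementary inequality $e^{-x/2}\ge 1-\tfrac x2+\tfrac{1\wedge x^2}{10}$ (Lemma~\ref{lm:super}), which is insensitive to large positive increments and only needs the weak assumption $\PP(Z_{t+1}\ne Z_t\mid\cF_t)\ge p$ on $\{\tau>t\}$.

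Second, your ``conditional variance $\gtrsim\pmin$'' claim is only a heuristic, and you acknowledge the tension: a one-step coupling that contracts $W$ may well be deterministic in the distance, and forcing one marginal to stay while the other moves does not by itself change $\dist(X_t,Y_t)$. The paper's replacement for this step is Lemma~\ref{lm:coupling}: among \emph{optimal} couplings of $P(x,\cdot),P(y,\cdot)$, one can choose one with $\chi(\Gamma)\ge\pmin$ where $\Gamma=\{(u,v):\dist(u,v)<\dist(x,y)\}$. The proof uses laziness in a precise way (the $\chi(A^c\times\{y\})\ge P(y,y)-\chi(\Gamma)>0$ step) and a local exchange argument to exhibit an optimal coupling with strict-decrease probability $\ge\pmin$ — a specific statement about \emph{decrease} rather than a variance bound, and exactly what Lemma~\ref{lm:super} needs. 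So while the skeleton of your proposal is right, both of its ``key probabilistic inputs'' need to be replaced: the bounded-increment/second-moment estimate by an exponential hitting-time estimate valid for directed metrics, and the heuristic variance argument by a genuine refinement of the optimal coupling.
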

Variants of this result have appeared in a number of works, under various forms  \cite{MR2316551,lin2015equivalent,munch2019non,salez2021sparse}. However, all proofs use the fact that the increments of the process $t\mapsto \dist(X_t,Y_t)$ are uniformly bounded (by $2$), and this property may dramatically fail in our more general setup where the metric is directed. Nevertheless, the conclusion turns out to remain valid, and a proof is presented in Section \ref{sec:tvdecay}. The most ``obvious'' application of Theorem \ref{th:tvdecay} consists in  taking a maximum over all states $x,y\in V$ to obtain the following mixing-time estimate, which is new in our directed setup.
\begin{corollary}[Diameter bound]\label{co:diam}If $P$ is lazy and non-negatively curved, then
\begin{eqnarray*}
\tmix & \le & \frac{160\,(\diam)^2}{\pmin},
\end{eqnarray*}
where $\diam:=\max_{x,y}\dist(x,y)$ denotes the diameter of the state space.
\end{corollary}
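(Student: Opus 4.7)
The plan is to reduce the mixing-time bound to Theorem \ref{th:tvdecay} through the standard stationarity trick $\pi=\pi P^{t}$. Writing the invariant measure $\pi$ as a convex combination of Dirac masses, I would estimate, for an arbitrary $x\in V$,
\[
\|P^{t}(x,\cdot)-\pi\|_{\tv} \;=\; \Bigl\|\sum_{y\in V}\pi(y)\bigl(P^{t}(x,\cdot)-P^{t}(y,\cdot)\bigr)\Bigr\|_{\tv} \;\le\; \sum_{y\in V}\pi(y)\,\|P^{t}(x,\cdot)-P^{t}(y,\cdot)\|_{\tv},
\]
by the triangle inequality for the total-variation norm.

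\textbf{Applying Theorem \ref{th:tvdecay}.} Each term in the sum above is controlled by Theorem \ref{th:tvdecay}, giving
\[
\|P^{t}(x,\cdot)-\pi\|_{\tv} \;\le\; \sqrt{\frac{10}{(t+1)\pmin}}\,\sum_{y\in V}\pi(y)\,\dist(x,y) \;\le\; \diam\,\sqrt{\frac{10}{(t+1)\pmin}}.
\]
Since $x$ is arbitrary, this also bounds $\dtv(t)$. Setting the right-hand side equal to $1/4$ and solving for $t$ yields the condition $(t+1)\pmin\ge 160\,(\diam)^{2}$, which gives $\tmix\le 160(\diam)^{2}/\pmin$ as claimed.

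\textbf{Remarks on obstacles.} There is essentially no obstacle here: once Theorem \ref{th:tvdecay} is available, the only ingredient is the convexity (triangle inequality) of total variation together with the invariance of $\pi$. The one subtlety worth mentioning is that $\dist$ need not be symmetric in the general directed setting, so when writing $\sum_{y}\pi(y)\dist(x,y)\le \diam$ one should be careful to use the order $\dist(x,y)$ (distance from the starting state to $y$) consistent with Theorem \ref{th:tvdecay}; the definition $\diam:=\max_{x,y}\dist(x,y)$ covers this without any additional assumption. The factor $160=16\cdot 10$ arises from squaring the condition $\diam\sqrt{10/((t+1)\pmin)}\le 1/4$.
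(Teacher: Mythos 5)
Your proof is correct and matches the paper's intended (though unspelled) argument: the paper simply remarks that Corollary~\ref{co:diam} follows from Theorem~\ref{th:tvdecay} by "taking a maximum over all states $x,y\in V$," and your averaging over $y$ with $\pi=\sum_y\pi(y)P^t(y,\cdot)$ is the standard way to make that precise, landing on the same bound and the same constant $160$.
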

While interesting in its own right, this estimate is actually not the key to the new results mentioned in the Introduction. Our main finding is that a  significantly finer estimate  can be deduced from Theorem \ref{th:tvdecay} provided we replace the \emph{worst-case} mixing time  by its \emph{average} version:
\begin{eqnarray*}
\tmix^{\sharp} & := & \min\left\{t \ge 0\colon \dtv^\sharp(t)\le \frac 14\right\},\quad\textrm{ where }\quad \dtv^\sharp(t):=\sum_{x\in V}\pi(x)\left\|P^t(x,\cdot)-\pi\right\|_{\tv}.
\end{eqnarray*}
\begin{remark}[Transitive chains]Obtaining a bound on  $\tmix^\sharp$ rather than $\tmix$ is not a huge drawback. For example, we have $\tmix^\sharp=\tmix$ for all random walks on groups and, more generally,   for all \emph{transitive} chains ($P$ is transitive if for each  $x,y \in V$, there is a bijection $f\colon V\to V$ which maps $x$ to $y$ and preserves the transition kernel, i.e., $P(f(u),f(v))=P(u,v)$ for all $u,v \in V$). 
\end{remark}
Throughout the paper, we let $X=(X_t)_{t\ge 0}$ denote a Markov chain with transition matrix $P$ starting from stationarity ($X_0\sim \pi$). Our main new estimate on $\tmix^\sharp$ will depend on  two key statistics of this chain. The first one is the \emph{mean displacement}  at time $t$: \begin{eqnarray*}
\EE[\dist(X_0,X_t)]& = & \sum_{x,y\in V}\pi(x)P^t(x,y)\dist(x,y).
\end{eqnarray*}
The second is the \emph{escape probability} in $t$ steps, i.e. the conductance of the $t$-th power of $P$:
\begin{eqnarray*}
\Phi(P^t) & = & \min\left\{\PP\left(X_t\notin A|X_0\in A\right)\colon A\subseteq V,0<\pi(A)\le \frac 12\right\}\\
& = &\min\left\{\frac{1}{\pi(A)}\sum_{x\in A}\sum_{y\in A^c}\pi(x)P^t(x,y)\colon A\subseteq V,0<\pi(A)\le \frac 12\right\}.
\end{eqnarray*}
\begin{theorem}[Main estimate]\label{th:main} If $P$ is lazy and non-negatively curved, then 
\begin{eqnarray}
\label{infimum}
\tmix^\sharp & \le & \frac{160}{\pmin}\,\inf_{t\ge 1}\left\{\frac{\EE[\dist(X_0,X_t)]}{\Phi(P^t)}\right\}^2,
\end{eqnarray}
where we recall that $\pmin$ denotes the smallest non-zero entry of $P$.
\end{theorem}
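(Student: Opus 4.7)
The plan is to combine Theorem \ref{th:tvdecay} with a Cheeger-type isoperimetric inequality for pseudo-metrics on $V$, turning a coarse ``average diameter'' bound into the sharper quantity $\EE[\dist(X_0,X_t)]/\Phi(P^t)$.

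First, setting $D:=\sum_{x,y}\pi(x)\pi(y)\,\dist(x,y)$, I would derive the weaker estimate $\tmix^\sharp\le 160\,D^{2}/\pmin$. By stationarity $\pi=\sum_y\pi(y)P^s(y,\cdot)$ as a measure, so the triangle inequality for total variation gives
\[
\|P^s(x,\cdot)-\pi\|_{\tv}\ \le\ \sum_y\pi(y)\,\|P^s(x,\cdot)-P^s(y,\cdot)\|_{\tv}.
\]
Averaging over $x\sim\pi$ and applying Theorem \ref{th:tvdecay} to the inner term yields $\dtv^\sharp(s)\le D\sqrt{10/((s+1)\pmin)}$, and requiring the right-hand side to be $\le 1/4$ produces the claimed bound in terms of $D$.

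The whole statement then reduces to the following Cheeger-type inequality, which is the heart of the proof: for every $t\ge 1$,
\begin{equation}
\Phi(P^t)\,D\ \le\ \EE[\dist(X_0,X_t)].\label{eq:star}
\end{equation}
Granted \eqref{eq:star}, substituting $D\le\EE[\dist(X_0,X_t)]/\Phi(P^t)$ into the previous step and taking the infimum over $t\ge 1$ recovers the theorem with the stated constant. To prove \eqref{eq:star} the natural route is a layer-cake decomposition: write both sides as $\sum_{r\ge 0}\PP(\dist>r)$, and try to decompose each symmetric level set $\{\dist>r\}\subset V\times V$ into edge boundaries of vertex cuts $A\subset V$. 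For any such cut, the definition of $\Phi(P^t)$ furnishes the one-step estimate $\Phi(P^t)\pi(A)\le\sum_{x\in A}\pi(x)P^t(x,A^c)$, and summing these contributions along shortest paths (in the spirit of tree-type decompositions of graph metrics) should yield \eqref{eq:star}.

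The main obstacle is that the naive \emph{per-level} inequality $\Phi(P^t)\,\PP_{\pi\times\pi}(\dist>r)\le\PP(\dist(X_0,X_t)>r)$ cannot possibly hold: on the cycle $\dZ_n$, $\PP(\dist(X_0,X_t)>r)=0$ as soon as $r\ge t$, whereas $\PP_{\pi\times\pi}(\dist>r)$ remains positive up to $r\approx n/2$. Hence \eqref{eq:star} must be proved in a genuinely \emph{summed} way, trading contributions across scales so that the small-$r$ surplus of $\PP(\dist(X_0,X_t)>r)$ absorbs the large-$r$ deficit. A further subtlety is the \emph{directed} nature of $\dist$, which means the cut decomposition has to accommodate the asymmetry between $\dist(x,y)$ and $\dist(y,x)$. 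Once \eqref{eq:star} is in hand, tracking the constant $160$ through the first step is purely routine bookkeeping.
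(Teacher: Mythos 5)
Your step 1 is correct (and in fact slightly sharper than what the paper records as (\ref{effectivediam}), since you average Theorem \ref{th:tvdecay} against $\pi\otimes\pi$ directly rather than passing through $\Phi(P^\infty)\ge 1/2$). The problem is step 2: the inequality
\[
\Phi(P^t)\,\diam^\sharp \ \le\ \EE[\dist(X_0,X_t)]
\]
which you yourself call ``the heart of the proof'', is never established. You sketch a layer-cake strategy, correctly observe that the per-level version fails on the cycle, assert that some ``genuinely summed'' redistribution of mass across scales should rescue it, and flag the directedness of $\dist$ as a further difficulty — but none of these obstacles is actually resolved. This is not a routine gap: the inequality is a standalone geometric statement about arbitrary lazy chains (curvature plays no role in it), it is essentially tight already on the star graph at $t=1$, and it does not follow in any obvious way from Lemma \ref{lm:L1} (applying that lemma to $y\mapsto\dist(x_0,y)$ only controls the $L^1$ \emph{deviation} of the distance function from its mean, not the mean itself). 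Moreover, even if a version of this inequality held with a multiplicative constant $C>1$, your constant $160$ would become $160\,C^2$, so the exact constant would also need to be tracked.

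The paper's proof avoids this entirely by reversing the order of the two ingredients. Instead of first producing the diameter bound $\dtv^\sharp(t)\le\diam^\sharp\sqrt{10/((t+1)\pmin)}$ and then trying to upgrade $\diam^\sharp$, the paper applies Lemma \ref{lm:L1} (with $P^s$ in place of $P$) to the centered function $x\mapsto P^t(x,z)-\pi(z)$ and sums over $z$, yielding
\[
\dtv^\sharp(t)\ \le\ \frac{1}{\Phi(P^s)}\sum_{x,y}\pi(x)P^s(x,y)\,\bigl\|P^t(x,\cdot)-P^t(y,\cdot)\bigr\|_{\tv},
\]
and only \emph{then} invokes Theorem \ref{th:tvdecay} inside the sum, so the weight $\pi(x)P^s(x,y)$ appears from the start and $\EE[\dist(X_0,X_s)]/\Phi(P^s)$ emerges directly. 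The Cheeger-type inequality thus acts on the total-variation distances rather than on the graph metric, and the geometric comparison you are missing is never needed.
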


Theorem \ref{th:main}  has a number of notable consequences, which we now enumerate. The simplest one is an ``average'' version of Corollary \ref{co:diam}, obtained by  sending $t\to\infty$ in the infimum (\ref{infimum}): 
\begin{eqnarray}
\label{effectivediam}
\tmix^\sharp & \le & \frac{640\,(\diam^\sharp)^2}{\pmin},
\end{eqnarray}
where $
\diam^\sharp  :=  \sum_{x,y\in V}\pi(x)\pi(y)\dist(x,y)
$ denotes the \emph{effective diameter}. Note that the latter can be significantly smaller than the true diameter appearing in Corollary \ref{co:diam} (consider, e.g., the biased random walk on a  segment). A much more refined consequence of Theorem \ref{th:main} is obtained  by taking $t=1$ in the infimum (\ref{infimum}): writing $\Phi=\Phi(P)$, we readily obtain the following surprising bound.
\begin{corollary}[Conductance bound]\label{co:conductance}If $P$ is lazy and non-negatively curved, then
\begin{eqnarray*}
\tmix^\sharp & \le & \frac{40}{\pmin\Phi^2}.
\end{eqnarray*}
\end{corollary}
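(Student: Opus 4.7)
The plan is to specialize the ``main estimate'' of Theorem \ref{th:main} to the single term $t=1$ in the infimum, so that the denominator becomes $\Phi(P)=\Phi$ and the numerator reduces to the \emph{one-step} mean displacement $\EE[\dist(X_0,X_1)]$. The corollary will then follow immediately once I show $\EE[\dist(X_0,X_1)]\le 1/2$, i.e.\ that the chain moves by Markov-distance at most one on average in one step.

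The key observation is essentially tautological: for any $x,y\in V$ with $P(x,y)>0$, the very definition $\dist(x,y)=\min\{k\in\dN\colon P^k(x,y)>0\}$ forces $\dist(x,y)\le 1$. Applied to the stationary chain, this gives the pointwise inequality $\dist(X_0,X_1)\le \mathbf{1}_{X_1\ne X_0}$, hence
\[
\EE[\dist(X_0,X_1)] \ \le \ \PP(X_1\ne X_0) \ = \ \sum_{x\in V}\pi(x)\bigl(1-P(x,x)\bigr).
\]
I would then invoke the laziness hypothesis $P(x,x)\ge 1/2$ for every $x\in V$, which yields $\EE[\dist(X_0,X_1)]\le 1/2$ at once.

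Substituting this bound into (\ref{infimum}) with $t=1$ gives
\[
\tmix^\sharp \ \le \ \frac{160}{\pmin}\cdot \frac{(1/2)^2}{\Phi^2} \ = \ \frac{40}{\pmin\,\Phi^2},
\]
which is the claimed estimate. There is no real obstacle here: the entire substantive content has been packaged into Theorem \ref{th:main}, and the two remaining ingredients (the trivial one-step bound $\dist(X_0,X_1)\le 1$ and the laziness assumption) are elementary. The only conceptual point worth underlining is how striking the conclusion is in spite of the triviality of its derivation: a single application of the main estimate at $t=1$ already suffices to convert a lower bound on the conductance into a matching upper bound on the average mixing time, without any spectral or log-Sobolev input.
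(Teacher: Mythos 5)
Your proof is correct and matches the paper's route exactly: specialize the infimum in Theorem \ref{th:main} at $t=1$, observe $\dist(X_0,X_1)={\bf 1}_{(X_1\ne X_0)}$, and use laziness to get $\EE[\dist(X_0,X_1)]=\PP(X_1\ne X_0)\le 1/2$, yielding $160\cdot(1/2)^2=40$. The paper compresses all of this into the phrase ``taking $t=1$ in the infimum ... we readily obtain'', so your explicit one-step displacement bound simply unpacks the implicit step that produces the constant $40$ rather than $160$.
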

This  offers a considerable improvement over (\ref{effectivediam}) in situations where the effective diameter diverges while the conductance remains bounded away from $0$ (consider, e.g.,  random walk on a  random Abelian Cayley graph with logarithmic degree). More importantly, by virtue of an elementary combinatorial lower-bound on $\tmix^\sharp$ (see, e.g., \cite[Section 7.1.1]{MR3726904}), Corollary \ref{co:conductance} implies the quantitative non-existence of non-negatively curved expanders promised in Theorem \ref{th:nonexp}. For general chains, we will show that $\tmix^\sharp$ can be bounded below by $\diam^\sharp$, leading to the following result.
\begin{corollary}[Poor expansion]\label{co:expansion}If $P$ is non-negatively curved, then
\begin{eqnarray*}
\Phi & \le & \frac{19}{\sqrt{\pmin\diam^\sharp}}.
\end{eqnarray*}
\end{corollary}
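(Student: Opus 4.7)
The plan is to combine the upper bound $\tmix^\sharp \le 40/(\pmin \Phi^2)$ from Corollary \ref{co:conductance} with a complementary lower bound of the form $\tmix^\sharp \gtrsim \diam^\sharp$.

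Since the hypothesis does not assume laziness, I would first pass to $Q := (P + I)/2$. Convexity of $W$ in its arguments gives $W(\mu Q, \nu Q) \le \tfrac{1}{2}\left(W(\mu P, \nu P) + W(\mu, \nu)\right) \le W(\mu, \nu)$, so $Q$ is non-negatively curved. Moreover the directed distance is preserved, since $Q^k(x, y) > 0$ iff $P^j(x, y) > 0$ for some $j \le k$; hence $\diam^\sharp(Q) = \diam^\sharp(P)$. A direct inspection of the kernels gives $\Phi(Q) = \Phi(P)/2$ and $\pmin(Q) \ge \pmin(P)/2$. Applying Corollary \ref{co:conductance} to the lazy chain $Q$ therefore produces $\tmix^\sharp(Q) \le 320/\bigl(\pmin(P)\,\Phi(P)^2\bigr)$.

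The heart of the proof is then the lower bound $\diam^\sharp \lesssim \tmix^\sharp(Q)$. My plan is to exploit directed Kantorovich duality as follows. For each $x \in V$, the indicator $f_t(y) := \mathbf{1}[\dist(x, y) > t]$ is $1$-Lipschitz in the directed sense: if $\dist(y, y') = 1$, then the triangle inequality gives $\dist(x, y') \le \dist(x, y) + 1$, so $f_t(y') - f_t(y) \le 1$. Since $Q^t(x, \cdot)$ is supported on $\{y : \dist(x, y) \le t\}$, pairing against $f_t$ yields
\[
\pi\{y : \dist(x, y) > t\} \ = \ \pi f_t - Q^t(x, \cdot) f_t \ \le \ \|Q^t(x, \cdot) - \pi\|_{\tv};
\]
averaging over $x \sim \pi$ gives $\PP[\dist(X_0, X_\infty) > t] \le \dtv^\sharp(Q, t)$ for independent $\pi$-samples $X_0, X_\infty$. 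Summing over $t \ge 0$ and recalling $\diam^\sharp = \sum_t \PP[\dist > t]$ converts this into $\diam^\sharp \le \sum_{t \ge 0} \dtv^\sharp(Q, t)$.

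The main obstacle is to bound the right-hand side by a sufficiently small multiple of $\tmix^\sharp(Q)$: the initial block $t < \tmix^\sharp(Q)$ already consumes the trivial $\tmix^\sharp(Q)$, so almost no slack is available for the tail. To control the tail $t \ge \tmix^\sharp(Q)$, I would combine the polynomial TV decay $\dtv^\sharp(Q, t) \le \diam^\sharp \sqrt{10/((t+1)\pmin(Q))}$, obtained by averaging Theorem \ref{th:tvdecay}, with the submultiplicativity estimate $\dtv^\sharp(s + t) \le 2\,\dtv(s)\,\dtv^\sharp(t)$, iterating so that the borderline $1/\sqrt{t}$ rate is upgraded to a geometric one past mixing. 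This delivers $\diam^\sharp \le c_0\,\tmix^\sharp(Q)$ for a universal constant $c_0$, and combining with the first-step bound produces $\Phi \le \sqrt{320\,c_0}\,/\sqrt{\pmin\,\diam^\sharp}$; a careful tracking of all the constants introduced by lazification finally yields the announced $19/\sqrt{\pmin\,\diam^\sharp}$.
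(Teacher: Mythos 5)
Your overall strategy matches the paper's: both combine the upper bound $\tmix^\sharp \le 40/(\pmin\Phi^2)$ from Corollary \ref{co:conductance} with a lower bound $\tmix^\sharp \gtrsim \diam^\sharp$, and both pass to the lazy chain $(P+I)/2$ at the end. Your lazification bookkeeping ($\diam^\sharp$ preserved, $\Phi$ and $\pmin$ halved, curvature preserved via Kantorovich duality) is correct. The first part of your lower-bound argument --- $\|Q^t(x,\cdot)-\pi\|_\tv \ge \pi(V\setminus B_x(t))$, averaging to $\PP[\dist(X,Y)>t]\le\dtv^\sharp(Q,t)$ --- is also exactly the starting point of the paper's Lemma \ref{lm:diam}.

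The divergence, and the gap, is in how you finish. You want to sum the chain of inequalities over $t$, which forces you to control the entire tail $\sum_{t\ge\tmix^\sharp}\dtv^\sharp(Q,t)$. Your two proposed tools do not close this. First, the averaged Theorem \ref{th:tvdecay} bound $\dtv^\sharp(Q,t)\le\diam^\sharp\sqrt{10/((t+1)\pmin)}$ has $\diam^\sharp$ on the right side, so feeding it back into $\diam^\sharp\le\sum_t\dtv^\sharp(Q,t)$ is circular and, being a $t^{-1/2}$ decay, is not even summable. Second, the submultiplicative step $\dtv^\sharp(s+t)\le 2\,\dtv(s)\,\dtv^\sharp(t)$ involves the \emph{worst-case} distance $\dtv(s)$, and you have no control on when $\dtv(s)$ drops below $1/2$: Corollary \ref{co:conductance} bounds only $\tmix^\sharp$, not $\tmix$, and the two can differ by an unbounded factor for general (possibly non-reversible) chains. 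So the geometric upgrade you want past $\tmix^\sharp(Q)$ is not justified.

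The paper sidesteps the tail-sum entirely with a single-time-point argument (Lemma \ref{lm:diam}): it shows that $\PP[\dist(X,Y)\le\diam^\sharp-4/\Phi]\le 1/2$, hence $\dtv^\sharp(t)>1/4$ for all $t\le\diam^\sharp-4/\Phi$, giving $\tmix^\sharp\ge\diam^\sharp-4/\Phi$ directly. The concentration of $\dist(X,Y)$ around $\diam^\sharp$ comes from an $L^1$-Cheeger / Markov concentration inequality (Lemma \ref{lm:concentration}), applied to the one-sided Lipschitz functions $y\mapsto\dist(x,y)$ and $x\mapsto\EE[\dist(x,Y)]$. That ingredient --- concentration of the directed distance under $\pi$ via the conductance --- is the piece your proposal is missing, and without it the tail of your series cannot be controlled.
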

Thus, non-negatively curved chains which are large ($\diam^\sharp\gg 1$) and  sparse ($\pmin$ bounded away from $0$) must have poor expansion ($\Phi\ll 1$). This constitutes a  precise quantitative answer to the Markov-chain generalization of the question of Milman, Naor and Ollivier \cite[Problem T]{ollivier2010survey}.  Note that there are examples of sparse chains with non-negative curvature and arbitrarily many states (consider, e.g., a biased random walk on a segment). However, the fact that their effective diameter is  bounded forces  their stationary measure to concentrate on a bounded number of states.

 Corollary \ref{co:conductance} is sharp in the important case  where $P$ is transitive, reversible and sparse. Indeed, we have the classical lower-bound $\tmix\ge\trel$, where $\trel:=(1-\lambda_2)^{-1}$ denotes the inverse spectral gap of $P$ (see, e.g., \cite{MR3726904}), and the first author established in \cite{munch2019non} the Buser inequality
\begin{eqnarray*}
\trel & \ge & \frac{\pmin}{12\Phi^2},
\end{eqnarray*}
for any non-negatively curved, reversible chain. When combined with Corollary \ref{co:conductance}, this yields the following result, of which Theorem \ref{th:graphmix} is clearly a special case.
\begin{corollary}[No cutoff for sparse  chains]\label{co:cutoff}Fix $p\in(0,1)$. Then, any lazy reversible transitive chain with non-negative curvature and $\pmin\ge p$ satisfies
\begin{eqnarray*}
\tmix \ \asymp_p \ \trel \ \asymp_p \ \frac{1}{\Phi^2},
\end{eqnarray*} 
where the notation $a\asymp_p b$ means that the ratio $a/b$ is bounded from above and  below by positive constants that depend only on $p$. In particular, no family of such chains can exhibit  cutoff.
\end{corollary}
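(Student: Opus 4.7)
The plan is to chain Corollary \ref{co:conductance} with the classical Cheeger and Buser inequalities and with the classical bound $\trel \le \tmix$. Transitivity enters only to upgrade a statement about $\tmix^\sharp$ to one about $\tmix$.

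First, by the Remark on transitive chains, $\tmix = \tmix^\sharp$. Applying Corollary \ref{co:conductance} and using the hypothesis $\pmin \ge p$ yields the key upper bound
\[
\tmix \;=\; \tmix^\sharp \;\le\; \frac{40}{\pmin\,\Phi^2} \;\le\; \frac{40}{p\,\Phi^2}.
\]

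Next, I would pin down the order of $\trel$ in terms of $\Phi$. The easy direction comes from the standard Cheeger inequality, which for any lazy reversible chain gives $\trel \le 2/\Phi^2$. The non-trivial direction is the Buser inequality $\trel \ge \pmin/(12\Phi^2) \ge p/(12\Phi^2)$ from \cite{munch2019non}, which is precisely the point where non-negative curvature (together with reversibility) is used. Together these sandwich $\trel \asymp_p 1/\Phi^2$.

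Finally, combining with the universally valid bound $\trel \le \tmix$ produces the chain
\[
\trel \;\le\; \tmix \;\le\; \frac{40}{p\,\Phi^2} \;\le\; \frac{480}{p^2}\,\trel,
\]
which establishes $\tmix \asymp_p \trel \asymp_p 1/\Phi^2$. Since the cutoff phenomenon requires the product condition $\tmix/\trel \to \infty$, and the ratio is bounded by $480/p^2$ uniformly across any family satisfying the hypotheses, no such family can exhibit cutoff.

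The only step with genuine mathematical content is the Buser inequality of \cite{munch2019non}; the rest is bookkeeping. I therefore anticipate no real obstacle, in line with the text's framing of this corollary as an immediate consequence of Corollary \ref{co:conductance} combined with classical spectral facts.
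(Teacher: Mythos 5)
Your proposal is correct and follows essentially the same route as the paper: transitivity gives $\tmix=\tmix^\sharp$, Corollary~\ref{co:conductance} gives the upper bound on $\tmix$, and the Buser inequality of \cite{munch2019non} together with the classical $\trel\le\tmix$ close the sandwich. The invocation of the Cheeger inequality is harmless but superfluous, since the upper bound on $\trel$ already follows from $\trel\le\tmix\le 40/(p\Phi^2)$.
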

An important observation here is that the transitivity of the chain is only used to ensure that $\tmix=\tmix^\sharp$. Consequently, Corollary \ref{co:cutoff} extends to any collection of chains which are ``spatially homogeneous'' in the mild sense that $\tmix\asymp\tmix^\sharp$. 

Finally, a last notable consequence of Theorem \ref{th:main} is that the expected displacement of the chain over short time-scales is already substantial. More precisely, assuming that $P$ is reversible, we have 
\begin{eqnarray*}
\Phi(P^t) & \ge & \frac{1-\lambda_2^t}{2}\ \ge \ \frac{1-e^{-\frac{t}{\trel}}}{2},
\end{eqnarray*}
and the right-hand side is at least $\frac{1-e^{-1}}{2}$ for all $t\ge \trel$, yielding the following estimate. 

\begin{corollary}[Fast escape]\label{co:escape}If $P$ is lazy, reversible and non-negatively curved, then 
\begin{eqnarray*}
\forall t\ge\trel,\quad \EE[\dist(X_0,X_t)] & \ge & \frac{\sqrt{\tmix^\sharp\pmin}}{41},
\end{eqnarray*}
where we recall that $\EE[\dist(X_0,X_t)]=\sum_{x,y}\pi(x)P^t(x,y)\dist(x,y)$.
\end{corollary}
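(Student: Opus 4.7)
The plan is to read off Corollary~\ref{co:escape} as a direct consequence of Theorem~\ref{th:main}, combined with the easy (``Buser'') direction of Cheeger's inequality applied to the iterated chain $P^t$. Fix any $t\ge 1$; I will specialise to $t\ge\trel$ at the end. Theorem~\ref{th:main} applied at this single $t$ gives
\begin{equation*}
\tmix^\sharp \;\le\; \frac{160}{\pmin}\left(\frac{\EE[\dist(X_0,X_t)]}{\Phi(P^t)}\right)^2,
\end{equation*}
which rearranges to
\begin{equation*}
\EE[\dist(X_0,X_t)] \;\ge\; \Phi(P^t)\,\sqrt{\frac{\tmix^\sharp\,\pmin}{160}}.
\end{equation*}
So it suffices to bound $\Phi(P^t)$ from below by a universal constant whenever $t\ge\trel$.

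For this I would apply the standard Cheeger inequality to the reversible stochastic matrix $P^t$, which yields $\Phi(P^t)\ge (1-\lambda_2(P^t))/2$. Because $P$ is lazy, its spectrum lies in $[0,1]$, so $\lambda_2(P^t)=\lambda_2^t$. Using the elementary estimate $\lambda_2\le e^{-(1-\lambda_2)}=e^{-1/\trel}$, this gives
\begin{equation*}
\Phi(P^t)\;\ge\;\frac{1-\lambda_2^t}{2}\;\ge\;\frac{1-e^{-t/\trel}}{2}\;\ge\;\frac{1-e^{-1}}{2}\qquad(t\ge\trel),
\end{equation*}
exactly as sketched in the paragraph preceding the statement.

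Combining the two displays, we obtain for every $t\ge\trel$
\begin{equation*}
\EE[\dist(X_0,X_t)]\;\ge\;\frac{1-e^{-1}}{2}\cdot\frac{\sqrt{\tmix^\sharp\,\pmin}}{\sqrt{160}}\;=\;\frac{1-e^{-1}}{8\sqrt{10}}\,\sqrt{\tmix^\sharp\,\pmin},
\end{equation*}
and a direct numerical check gives $(1-e^{-1})/(8\sqrt{10})>1/41$, which is the claimed inequality. The proof is essentially a one-line substitution into Theorem~\ref{th:main}, so no step presents a genuine obstacle. The only non-trivial ingredient is the Cheeger bound $\Phi(P^t)\ge (1-\lambda_2^t)/2$, which is the sole place where reversibility is actually used; the laziness hypothesis enters only to guarantee non-negativity of the spectrum of $P$, ensuring the identity $\lambda_2(P^t)=\lambda_2^t$ rather than $\max(\lambda_2^t,\lambda_n^t)$.
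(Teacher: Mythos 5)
Your proposal is correct and coincides with the paper's own argument: both apply Theorem~\ref{th:main} at a single time $t$, then use the easy direction of Cheeger's inequality for the reversible matrix $P^t$ together with $\lambda_2(P^t)=\lambda_2^t$ (valid by laziness) and $\lambda_2\le e^{-1/\trel}$ to get $\Phi(P^t)\ge(1-e^{-1})/2$ for $t\ge\trel$, and then check the numerical constant $(1-e^{-1})/(8\sqrt{10})>1/41$.
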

For reversible transitive chains, Lee and Peres \cite{MR3127886} proved the diffusive lower-bound
\begin{eqnarray*}
 \EE[\dist(X_0,X_t)] & \ge & c\sqrt{t\pmin},
\end{eqnarray*}  for all $\pmin^{-1}\le t\le \trel$, where $c>0$ is a universal constant. They conjectured that this diffusive lower-bound should remain valid until the mixing time \cite[Conjecture 2.5]{MR3127886}.  Corollary \ref{co:escape} readily implies that this is true in the non-negatively curved case, and Theorem \ref{th:speed} follows as a special case. 

\section{Proofs}\label{sec:proofs}  
Section \ref{sec:conductance} below is devoted to the proof of our main result, namely the relation between conductance, displacement and mixing times (Theorem \ref{th:main}). The latter exploits the diffusive total-variation decay of non-negatively curved chains (Theorem~\ref{th:tvdecay}), which will be proven independently in Section \ref{sec:tvdecay}.
Once Theorem \ref{th:main} is established, all announced corollaries follow effortlessly, except for Corollary~\ref{co:expansion}: the latter requires a lower bound on the average mixing time in terms of the effective diameter, which we prove in Section \ref{sec:diameter}.

\subsection{Mixing time vs. conductance}
\label{sec:conductance}

In this section, we  prove Theorem \ref{th:main}. 
 We will make crucial use of Theorem \ref{th:tvdecay}, as well as the following $L^1$ version of Cheeger's inequality. An important remark is that the latter holds without any assumption on the transition matrix $P$, and will thus also apply to powers of $P$. 
\begin{lemma}[$L^1$ analogue of Cheeger's inequality]\label{lm:L1} If $f\colon V\to\dR$ satisfies $\pi f=0$, then,
\begin{eqnarray*}
 \sum_{x\in V}\pi(x)\left|f(x)\right| & \le & \frac{1}{\Phi(P)}\sum_{x,y\in V}\pi(x)P(x,y)\left|f(y)-f(x)\right|.
\end{eqnarray*}
\end{lemma}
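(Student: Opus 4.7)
The plan is to combine the coarea (layer-cake) formula with the definition of $\Phi(P)$ to recast both sides of the inequality as integrals over the one-parameter family of super-level sets $A_s:=\{f>s\}$.

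First I would apply the identity $|f(y)-f(x)|=\int_{\mathbb{R}}|\mathbf{1}_{A_s}(y)-\mathbf{1}_{A_s}(x)|\,ds$ on the right-hand side, swap summation and integration, and use the fact that $|\mathbf{1}_{A_s}(y)-\mathbf{1}_{A_s}(x)|=1$ precisely when exactly one of $x,y$ lies in $A_s$. Writing $Q(A,B):=\sum_{x\in A,y\in B}\pi(x)P(x,y)$ and invoking the stationarity of $\pi$, which gives $Q(A,A^c)=Q(A^c,A)$ for every $A\subseteq V$, this rewrites the right-hand side as
\[
\sum_{x,y}\pi(x)P(x,y)|f(y)-f(x)| \ = \ 2\int_{\mathbb{R}}Q(A_s,A_s^c)\,ds.
\]
Applying the definition of $\Phi(P)$ to whichever of $A_s$ or $A_s^c$ has $\pi$-mass at most $1/2$ (again using $Q(A_s,A_s^c)=Q(A_s^c,A_s)$) yields the pointwise bound $Q(A_s,A_s^c)\ge \Phi(P)\min(\pi(A_s),1-\pi(A_s))$, so
\[
\sum_{x,y}\pi(x)P(x,y)|f(y)-f(x)| \ \ge \ 2\Phi(P)\int_{\mathbb{R}}\min(\pi(A_s),1-\pi(A_s))\,ds.
\]

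The remaining task is to bound $\sum_x\pi(x)|f(x)|$ above by twice the same integral, and this is where $\pi f=0$ enters. Letting $F$ be the cumulative distribution function of $f$ under $\pi$, so that $\pi(A_s)=1-F(s)$, the layer cake formula gives $\sum_x\pi(x)|f(x)|=\int_0^{\infty}(1-F)\,ds+\int_{-\infty}^0 F\,ds$, and the hypothesis $\pi f=0$ forces the two integrals on the right to be equal, each equal to $\tfrac12\sum_x\pi(x)|f(x)|$. I would then fix any median $m$ of $f$ under $\pi$ and split $\mathbb{R}$ at $m$, on either side of which $\min(F,1-F)$ simplifies (to $F$ on $(-\infty,m)$ and to $1-F$ on $[m,\infty)$). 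A short case distinction according to the sign of $m$, combined with the balance identity above, shows
\[
\int_{\mathbb{R}}\min(\pi(A_s),1-\pi(A_s))\,ds \ \ge \ \tfrac12\sum_x\pi(x)|f(x)|,
\]
which plugged into the previous display yields the lemma.

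The main obstacle is the final inequality: because the median of a mean-zero function under $\pi$ need not be $0$, one cannot identify $\int|f|\,d\pi$ directly with $2\int\min(F,1-F)\,ds$, and the slack between them must be absorbed by exploiting the coincidence $\int_0^\infty(1-F)\,ds=\int_{-\infty}^0 F\,ds$ coming from $\pi f=0$. Once this bookkeeping is carried out, every other step is a straightforward consequence of stationarity and the definition of $\Phi(P)$, and notably no reversibility, symmetry of the support of $P$, or laziness is needed, which is crucial since the lemma will later be applied to arbitrary powers $P^t$ (as the authors emphasize just before the statement).
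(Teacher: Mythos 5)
Your proof is correct, and it is built on the same two pillars as the paper's — the coarea (layer-cake) formula applied to level sets of $f$, and the definition of the conductance $\Phi(P)$ — so the two arguments are cousins rather than strangers. The bookkeeping, however, is done differently. The paper first normalizes so that (morally) $\pi(f>0)\le 1/2$, then applies layer-cake only to the positive part $f_+$ over $t\in(0,\infty)$, so that every super-level set automatically satisfies the mass constraint in the definition of $\Phi$; it then converts $\sum\pi f_+$ to $\tfrac12\sum\pi|f|$ using $\pi f=0$, and converts $(f_+(x)-f_+(y))_+$ to $\tfrac12|f_+(x)-f_+(y)|\le\tfrac12|f(x)-f(y)|$ using the fact that gradients are centered under the edge measure $\pi(x)P(x,y)$. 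You instead apply layer-cake to the full gradient $|f(y)-f(x)|$ over all $s\in\mathbb{R}$, bound each slice by $\Phi(P)\min(\pi(A_s),1-\pi(A_s))$ after symmetrizing via $Q(A,A^c)=Q(A^c,A)$, and then close the argument with the balance identity $\int_0^\infty(1-F)\,ds=\int_{-\infty}^0 F\,ds$ (equivalent to $\pi f=0$) together with a case distinction on the sign of a $\pi$-median of $f$. Your median-splitting step is exactly what replaces the paper's reduction to $f_+$; it is slightly longer but more symmetric, and it makes visible the role of $\pi f=0$ in forcing the median and the mean close. Both proofs use stationarity of $\pi$ and nothing else about $P$, which, as you rightly note, is what lets the lemma be applied later to $P^t$.
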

\begin{proof} Upon replacing $f$ with $-f$, we may assume that $\pi(f\ge 0)\ge 1/2$. For any $t\ge 0$, we may take $A=\{f\ge t\}$ in the definition of $\Phi(P)$ to obtain
\begin{eqnarray*}
\Phi(P) \sum_{x\in V}\pi(x){\bf 1}_{(f(x)\ge t)} & \le & \sum_{x,y\in V}\pi(x)P(x,y){\bf 1}_{(f(y)\le t<f(x))}.
\end{eqnarray*}
Integrating over $t\in\dR_+$ and interchanging the sum and integral, we obtain
\begin{eqnarray*}
\Phi(P)\sum_{x\in V}\pi(x)f_+(x) & \le & \sum_{x,y\in V}\pi(x)P(x,y)\left(f_+(x)-f_+(y)\right)_+,
\end{eqnarray*}
where $a_+:=\max(0,a)$ denotes the positive part of $a$. Now, since $f$ is centered under $\pi$,  the left-hand side does not change if we replace $f_+(x)$ by $|f(x)|/2$. Similarly, since any gradient is centered under the measure $(x,y)\mapsto \pi(x)P(x,y)$, the right-hand side does not change if we replace $\left(f_+(x)-f_+(y)\right)_+$ by $\left|f_+(x)-f_+(y)\right|/2$. Finally, observe that $\left|f_+(x)-f_+(y)\right| \le|f(x)-f(y)|$.
\end{proof}
\begin{proof}[Proof of Theorem \ref{th:main}]Fix $s\ge 1$. Lemma \ref{lm:L1} applied to $P^s$ instead of $P$ gives
\begin{eqnarray*}
\sum_{x\in V}\pi(x)\left|f(x)\right|& \le & \frac{1}{\Phi(P^s)}\sum_{x,y\in V}\pi(x)P^s(x,y)\left|f(x)-f(y)\right|,
\end{eqnarray*}
for any centered observable $f\colon V\to\dR$. Now, fix $t\in\dN$ and $z\in V$, and let us apply this to the observable $f(x):=P^t(x,z)-\pi(z)$, which is centered because $\pi P^t=\pi$. We readily obtain
\begin{eqnarray*}
\sum_{x\in V}\pi(x)\left|P^t(x,z)-\pi(z)\right|& \le & \frac{1}{\Phi(P^s)}\sum_{x,y\in V}\pi(x)P^s(x,y)\left|P^t(x,z)-P^t(y,z)\right|.
\end{eqnarray*}
We may now sum over all $z\in V$ and use Theorem \ref{th:tvdecay} to get
\begin{eqnarray*}
\dtv^\sharp(t) & \le & \frac{1}{\Phi(P^s)}\sum_{x,y\in V}\pi(x)P^s(x,y)\left\|P^t(x,\cdot)-P^t(y,\cdot)\right\|_{\textsc{tv}}\\
& \le & \sqrt{\frac{10}{(t+1)\pmin}}\frac{\EE\left[\dist(X_0,X_s)\right]}{\Phi(P^s)}. 
\end{eqnarray*}
Finally, choosing $t$ so that the right-hand side is smaller than $1/4$ shows that
\begin{eqnarray*}
\tmix^\sharp & \le & \frac{160}{\pmin}\left(\frac{\EE\left[\dist(X_0,X_s)\right]}{\Phi(P^s)}\right)^2.
\end{eqnarray*}
The result follows by taking an infimum over all $s\ge 1$. 
\end{proof}

\subsection{Effective diameter vs. conductance}\label{sec:diameter}
Here we prove Corollary \ref{co:expansion}. We will use the following concentration inequality. 
\begin{lemma}[Concentration inequality]\label{lm:concentration}For any $f\colon V\to\dR$ and $a>0$,
\begin{eqnarray*}
\pi\left(f\ge \pi f+ a\right) & \le & \frac{1}{a\Phi}\min\left(\max_{y\sim x}\left(f(y)-f(x)\right)_+,\max_{y\sim x}\left(f(x)-f(y)\right)_+\right),
\end{eqnarray*}
and the same holds for $\pi(f\le \pi f- a)$.
\end{lemma}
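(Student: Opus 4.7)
The plan is to apply the $L^1$ Cheeger inequality of Lemma~\ref{lm:L1} to the centered function $g := f - \pi f$, and then translate the resulting $L^1$ bound on the oscillation of $f$ into a tail bound via Markov's inequality. The naive application yields
\[
\sum_{x} \pi(x) |f(x) - \pi f| \;\le\; \frac{1}{\Phi} \sum_{x,y} \pi(x) P(x,y) |f(x) - f(y)|,
\]
and the nontrivial point will be to bound the right-hand side by the \emph{minimum} of the two directed Lipschitz quantities in the statement, rather than their maximum (which is what one would naively extract from $|f(x)-f(y)|$).

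The key observation I will exploit is that stationarity $\pi P = \pi$ forces the forward and backward gradients of $f$ to balance exactly under the stationary flow:
\[
\sum_{x,y} \pi(x) P(x,y) \bigl(f(y) - f(x)\bigr) \;=\; \pi P f - \pi f \;=\; 0,
\]
so the positive and negative parts of this signed sum coincide. Denoting their common value by $S$, I bound $S$ in each direction separately: since $P(x,y)>0$ implies $y\sim x$ and $\sum_{x,y}\pi(x)P(x,y)=1$, I get both $S\le\max_{y\sim x}(f(y)-f(x))_+$ \emph{and} $S\le\max_{y\sim x}(f(x)-f(y))_+$, hence $S\le m$, where $m$ denotes the minimum of the two. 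Writing $|f(x)-f(y)|=(f(x)-f(y))_++(f(y)-f(x))_+$ then gives $\sum_{x,y}\pi(x)P(x,y)|f(x)-f(y)| = 2S \le 2m$.

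Plugging this into the $L^1$ Cheeger bound yields $\sum_{x} \pi(x)|f(x)-\pi f| \le 2m/\Phi$. To pass to a tail bound, I use the elementary identity $\sum_{x} \pi(x)(f-\pi f)_+ = \sum_{x} \pi(x)(f-\pi f)_- = \tfrac{1}{2} \sum_{x} \pi(x)|f-\pi f|$ (which follows from $\pi(f-\pi f)=0$), so that a one-line Markov inequality gives $a\,\pi(f \ge \pi f + a) \le m/\Phi$, as desired. The symmetric lower-tail bound $\pi(f \le \pi f - a) \le m/(a\Phi)$ follows by applying the same argument to $-f$, which merely swaps the two arguments of the minimum and thus leaves $m$ unchanged. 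No serious obstacle is expected: the only genuinely clever step is the stationarity-based symmetrization that converts the $\max$ into the $\min$.
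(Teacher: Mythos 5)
Your proof is correct and uses essentially the same ideas as the paper's: apply the $L^1$ Cheeger bound (Lemma~\ref{lm:L1}), exploit the fact that the gradient $f(x)-f(y)$ is centered under $\pi(x)P(x,y)$ to write the $L^1$ oscillation as $2S$ and bound $S$ by \emph{either} directed Lipschitz constant (hence by their minimum), and finish with Markov's inequality together with $\sum_x\pi(x)(f-\pi f)_+ = \tfrac12\sum_x\pi(x)|f-\pi f|$. The only difference is cosmetic ordering: the paper invokes Markov's inequality first and Lemma~\ref{lm:L1} second, whereas you do the reverse.
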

We remark that if "$\sim$" is symmetric, then the minimum is the Lipschitz constant of $f$.

\begin{proof}
Upon replacing $f$ by $f-\pi f$ if necessary, we may assume that $f$ is centered under $\pi$, i.e. $\pi f=0$. Now, we use Markov's inequality and Lemma \ref{lm:L1} to write
\begin{eqnarray*}
\pi\left(f\ge a\right) & \le &  \frac{1}{a}\sum_{x\in V}\pi(x)f_+(x)\\
& =&  \frac{1}{2a}\sum_{x\in V}\pi(x)|f(x)|\\
& \le &\frac{1}{2a\Phi}\sum_{x,y\in V}\pi(x)P(x,y)\left|f(x)-f(y)\right|\\
& = & \frac{1}{a\Phi}\sum_{x,y\in V}\pi(x)P(x,y)\left(f(x)-f(y)\right)_+\\
& \le & \frac{1}{a\Phi}\max_{y\sim x}\left(f(x)-f(y)\right)_+.
\end{eqnarray*}
Note that, since the function $(x,y)\mapsto f(x)-f(y)$ is centered under the measure $(x,y)\mapsto \pi(x)P(x,y)$, the integral of its positive part equals that of its negative part. This establishes the first claim, and the second is obtained by replacing $f$ with $-f$. 
\end{proof}
We will use the above lemma to prove the following lower-bound on the mixing time.
\begin{lemma}[Diameter lower-bound]\label{lm:diam}For any lazy chain $P$, we have $\tmix^\sharp \ge \diam^\sharp-\frac{4}{\Phi}$.
\end{lemma}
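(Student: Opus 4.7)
The plan is to combine two elementary ingredients: a universal lower bound on $\tmix^\sharp$ exploiting that the chain cannot travel more than unit distance per step, and a Cheeger-type $L^1$-concentration for $\dist$ under $\pi\otimes\pi$ furnished by Lemma~\ref{lm:L1}.

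For the first ingredient I would use that $P^t(X_0,X_t)>0$ almost surely forces $\dist(X_0,X_t)\le t$ pointwise. Consequently the set $A_x:=\{y:\dist(x,y)>t\}$ satisfies $P^t(x,A_x)=0$, so $\|P^t(x,\cdot)-\pi\|_{\tv}\ge\pi(A_x)$. Averaging over $x\sim\pi$ and evaluating at $t=\tmix^\sharp$ yields
\[
\PP_{\pi\otimes\pi}\bigl(\dist(Y_1,Y_2)>\tmix^\sharp\bigr)\;\le\;\dtv^\sharp(\tmix^\sharp)\;\le\;\tfrac14.
\]

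For the second ingredient, setting $h(x):=\sum_y\pi(y)\dist(x,y)$ so that $\pi h=\diam^\sharp$, the goal is $\EE_{\pi\otimes\pi}|\dist(Y_1,Y_2)-\diam^\sharp|\le 4/\Phi$. I would split via the triangle inequality into $|\dist(Y_1,Y_2)-h(Y_1)|$ and $|h(Y_1)-\diam^\sharp|$ and bound each by $2/\Phi$. For the first, at every fixed $x$ the map $y\mapsto\dist(x,y)$ is $1$-Lipschitz in the paper's directed convention (since $\dist(x,y')\le\dist(x,y)+1$ for $y'\sim y$); applied to the $\pi$-centered function $\dist(x,\cdot)-h(x)$, the right-hand side of Lemma~\ref{lm:L1} simplifies via the stationarity identity $\sum_{x',y}\pi(x')P(x',y)(\dist(x,y)-\dist(x,x'))=0$, so absolute values reduce to twice the positive part, bounded by $2\|\dist(x,\cdot)\|_{\lip}\le 2$. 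The more delicate step is the second piece: although $h$ is not itself forward-Lipschitz, the triangle inequality gives $h(x)-h(x')\le\dist(x,x')=1$ whenever $x'\sim x$, so $\diam^\sharp-h$ is a centered $1$-Lipschitz function, and the same stationarity trick in Lemma~\ref{lm:L1} yields $\sum_x\pi(x)|h(x)-\diam^\sharp|\le 2/\Phi$.

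Since $\EE[\dist-\diam^\sharp]=0$ under $\pi\otimes\pi$, positive and negative parts have equal mean, so $\EE(\diam^\sharp-\dist)_+=\tfrac12\EE|\dist-\diam^\sharp|\le 2/\Phi$, and Markov's inequality gives
\[
\PP\bigl(\dist\le\diam^\sharp-4/\Phi\bigr)\;\le\;\tfrac12,\qquad\text{equivalently}\qquad\PP\bigl(\dist>\diam^\sharp-4/\Phi\bigr)\;\ge\;\tfrac12.
\]
If $\tmix^\sharp<\diam^\sharp-4/\Phi$, the inclusion $\{\dist>\diam^\sharp-4/\Phi\}\subseteq\{\dist>\tmix^\sharp\}$ would force $\PP(\dist>\tmix^\sharp)\ge 1/2$, contradicting the first ingredient. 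The only real obstacle in carrying out this plan is the directed-Lipschitz bookkeeping for $h$; once that is handled, everything else is a routine composition of triangle and Markov inequalities.
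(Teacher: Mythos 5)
Your proof is correct, and it departs from the paper's in an interesting way. The paper proves the concentration step by applying the concentration inequality (its Lemma~\ref{lm:concentration}, which packages Lemma~\ref{lm:L1} together with Markov's inequality) twice --- once to $y\mapsto\dist(x,y)$ and once to $h$ --- and then combining the two resulting probability bounds of $1/4$ each via a union bound; to make the constant work out to $4/\Phi$ it also needs the laziness upgrade from $\Phi$ to $2\Phi$ (via the substitution $P\mapsto 2P-I$). You instead stay at the $L^1$ level: you apply Lemma~\ref{lm:L1} directly to the same two functions, combine the resulting $L^1$ estimates of $2/\Phi$ each via the triangle inequality for the norm $\EE|\cdot|$, and only then perform a single Markov step, exploiting centering to halve the bound. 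The two routes give the identical constant $\diam^\sharp-4/\Phi$, but yours trades the union bound for additivity of the $L^1$ norm and thereby sidesteps the laziness improvement entirely; laziness then plays no role in the concentration part of your argument (it remains relevant only for $\tmix^\sharp$ to be meaningful). Your handling of the directed-Lipschitz bookkeeping --- using stationarity to reduce $|\nabla f|$ to twice its positive part, and checking the one-sided Lipschitz bound in the correct direction for both $\dist(x,\cdot)$ and $h$ --- matches the paper's and is carried out correctly.
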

\begin{proof}Let us first note that for a lazy chain, Lemma \ref{lm:concentration} holds with the better constant $2\Phi$ instead of $\Phi$ (just apply the lemma to the non-lazy chain $2P-I$). Now, fix $x\in V$ and $t\in\dN$, and write $B_x(t):=\{y\in V\colon \dist(x,y)\le t\}$. By definition,
\begin{eqnarray*}
\|P^t(x,\cdot)-\pi\|_\tv & = & \max_{A\subseteq V}|P^t(x,A)-\pi(A)|\\
& \ge & 1-\pi\left(B_x(t)\right)\\
& = & 1-\PP\left(\dist(x,Y)\le t\right),
\end{eqnarray*}
where $Y$ denotes a $\pi-$distributed random variable. Averaging over all  $x\in V$, we obtain
\begin{eqnarray*}
\dtv^\sharp(t) & \ge & 1-\PP\left(\dist(X,Y)\le t\right),
\end{eqnarray*}
where $X$ is $\pi-$distributed and independent of $Y$. Thus, the claim follows if we can show
\begin{eqnarray}
\label{toshow}
\PP\left(\dist(X,Y)\le \diam^\sharp-\frac{4}{\Phi}\right) & \le & \frac 12.
\end{eqnarray}
Now, for fixed $x\in V$, the function $f\colon y\mapsto \dist(x,y)$ satisfies $f(z)\le f(y)+1$ whenever $z\sim y$, by the triangle inequality. Thus, Lemma \ref{lm:concentration} ensures that
\begin{eqnarray*}
\PP\left(\dist(X,Y)\le \EE[\dist(X,Y)|X]-\frac{2}{\Phi}\right) & \le & \frac{1}{4}.
\end{eqnarray*}
On the other hand, by the triangle inequality again, the function $f\colon x\mapsto \EE[\dist(x,Y)]$  satisfies $f(x)\le f(y)+1$ whenever $y\sim x$, and $\pi f=\EE[\dist(X,Y)]=\diam^\sharp$. Thus, Lemma \ref{lm:concentration}  yields 
\begin{eqnarray*}
\PP\left(\EE[\dist(X,Y)|X]\le \diam^\sharp-\frac{2}{\Phi}\right) & \le & \frac{1}{4}. 
\end{eqnarray*}
Combining those two estimates readily yields (\ref{toshow}).
\end{proof}

\begin{proof}[Proof of Corollary \ref{co:expansion}]If $P$ is lazy and non-negatively curved, then Corollary \ref{co:conductance} and Lemma \ref{lm:diam} give
\begin{eqnarray*}
\diam^\sharp & \le & \frac{40}{\Phi^2\pmin}+\frac{4}{\Phi} \ \le \ \frac{41}{\Phi^2\pmin},
\end{eqnarray*}
because $\pmin,\Phi\le \frac 12$. If $P$ is not lazy, we apply the above result to  $(P+I)/2$. The latter is still non-negatively curved, but its conductance and minimal entry are half those of $P$, so we loose a factor of $8$ and obtain
\begin{eqnarray*}
\diam^\sharp & \le & \frac{328}{\Phi^2\pmin}.
\end{eqnarray*}
This readily implies the claim, because $\sqrt{328}<19$.
\end{proof}
\subsection{Diffusive total-variation decay}
\label{sec:tvdecay}
In this section, we prove Theorem \ref{th:tvdecay}. Fix two distinct states $x\ne y\in V$, and recall that
\begin{eqnarray*}
W\left(P(x,\cdot),P(y,\cdot)\right) & = & \inf_{\chi}\left\{\sum_{u,v\in V}\chi(u,v)\,\dist(u,v)\right\},
\end{eqnarray*}
where the infimum runs over all  probability distributions $\chi\in\cP(V^2)$ with marginals $P(x,\cdot)$ and $P(y,\cdot)$. Minimizers are called optimal couplings.   As in  \cite{MR2316551,munch2019non}, our first task consists in showing that they can  be chosen so as to assign a ``decent'' probability to the ``good'' set
\begin{eqnarray*}
\Gamma & := & \left\{(u,v)\in V^2\colon \dist(u,v)<\dist(x,y)\right\}.
\end{eqnarray*}
\begin{lemma}[Good optimal couplings] \label{lm:coupling}If $P$ is lazy and $x\ne y$, then there is an optimal coupling $\chi$ of $P(x,\cdot),P(y,\cdot)$ such that $\chi\left(\Gamma\right) \ge \pmin.$
\end{lemma}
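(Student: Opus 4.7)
The plan is to locate a ``shortcut'' pair $(z,y)\in\Gamma$ and exhibit an optimal coupling concentrating at least $\pmin$ units of mass on it. Let $d:=\dist(x,y)\ge 1$; since $P^d(x,y)>0$ there is a path $x=x_0,x_1,\ldots,x_d=y$ with $P(x_i,x_{i+1})\ge\pmin$. Laziness ensures $P(x_i,x_i)>0$, so any repetition $x_i=x_{i+1}$ could be excised to yield a shorter positive-probability path, contradicting the minimality of $d$. Hence the $x_i$ are pairwise distinct, and $z:=x_1$ satisfies $z\ne x$, $P(x,z)\ge\pmin$, and $\dist(z,y)=d-1$. In particular $(z,y)\in\Gamma$.

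Next, within the (nonempty, compact, convex) polytope of optimal couplings of $P(x,\cdot)$ and $P(y,\cdot)$, I would pick $\chi$ maximising the linear functional $\chi\mapsto\chi(z,y)$ and argue by contradiction that $\chi(z,y)\ge\pmin$. If instead $\chi(z,y)<\pmin$, the marginal identities $\sum_v\chi(z,v)=P(x,z)\ge\pmin$ and $\sum_u\chi(u,y)=P(y,y)\ge\tfrac12\ge\pmin$ produce some $v'\ne y$ with $\chi(z,v')>0$ and some $u'\ne z$ with $\chi(u',y)>0$. The four-point perturbation that adds $\delta>0$ to $\chi(z,y)$ and $\chi(u',v')$ and subtracts $\delta$ from $\chi(z,v')$ and $\chi(u',y)$ preserves both marginals and is nonnegative for small $\delta$; its cost increment equals
\[
\delta\bigl[(d-1)+\dist(u',v')-\dist(z,v')-\dist(u',y)\bigr],
\]
which optimality of $\chi$ forces to be $\ge 0$. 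An equality case would yield another optimal coupling $\chi_\delta$ with strictly larger $\chi_\delta(z,y)$, contradicting the maximality of $\chi(z,y)$ and closing the argument.

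The crux is therefore to exhibit an admissible $(u',v')$ achieving equality in the bracket above, and this is the main obstacle: it is the directed analogue of the four-point manipulations carried out for symmetric metrics in \cite{MR2316551,munch2019non}. My plan is to extract a dual optimum $(f,g)$ of the Kantorovich LP; complementary slackness at $(z,v')$ and $(u',y)$ together with dual feasibility at $(u',v')$ rewrites the bracket as $[\dist(z,y)-(g(y)-f(z))]+[\dist(u',v')-(g(v')-f(u'))]$, each summand being $\ge 0$. Both vanish precisely when the dual saturates $(z,y)$ and $(u',v')$ simultaneously. Saturation at $(z,y)$ should follow from the fact that $(z,y)$ sits on the geodesic $x\to z\to\cdots\to y$, whose consecutive unit-length edges can be made simultaneously tight at some dual optimum; saturation at $(u',v')$ holds automatically whenever $\chi(u',v')>0$ and can otherwise be arranged by choosing the reference pair $(u',v')$ from the support of $\chi$ via a suitable auxiliary swap.
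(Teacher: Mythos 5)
The argument has a genuine gap, and you in fact identify it yourself: you need a pair $(u',v')$ in the support of $\chi$ for which the cost increment vanishes, but $c$-cyclic monotonicity only gives you $\ge 0$, and nothing in your setup forces equality. The dual-potential sketch does not repair this. Writing $(f,g)$ for a dual optimum of the transport problem between $P(x,\cdot)$ and $P(y,\cdot)$, tightness along the geodesic $x\to z\to\cdots\to y$ is a statement about the \emph{different} transport problem between $\delta_x$ and $\delta_y$, and there is no reason for a Kantorovich potential of the former to be saturated on a geodesic of the latter. In fact the pointwise claim you are targeting --- that some optimal coupling satisfies $\chi(z,y)\ge\pmin$ for the \emph{specific} first geodesic step $z$ --- is strictly stronger than the lemma, and you give no mechanism that would certify it.

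The source of the trouble is that you fixed a single pair $(z,y)\in\Gamma$ and try to push $\pmin$ worth of mass onto it. The paper instead works with the \emph{aggregate} quantity $\chi(\Gamma)$, taking a doubly-optimal coupling that maximizes $\chi(\Gamma)$ over all optimal couplings and sets $A:=\{u\sim x:(u,y)\in\Gamma\}$. Then $\chi(A\times V)=P(x,A)\ge\pmin>\chi(\Gamma)$ forces some $(x_0,y_0)\in(A\times V)\setminus\Gamma$ with $\chi(x_0,y_0)>0$. Crucially, $(x_0,y_0)\notin\Gamma$ gives the lower bound $\dist(x_0,y_0)\ge\dist(x,y)$, which your $v'$ does not enjoy: in your version, $\chi(\{z\}\times V)\ge\pmin>\chi(z,y)$ only produces some $v'\ne y$ with $\chi(z,v')>0$, and $(z,v')$ may perfectly well still lie in $\Gamma$, so $\dist(z,v')$ has no useful lower bound. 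Combined with $x_0\in A$ (so $\dist(x_0,y)\le\dist(x,y)-1$) and the triangle inequality $\dist(x_1,y_0)\le\dist(x_1,y)+1$ (valid since $y_0\ne y$ is a neighbor of $y$), the lower bound $\dist(x_0,y_0)\ge\dist(x,y)$ is exactly what makes the four-point swap have non-positive cost increment, while $\widetilde\chi(\Gamma)>\chi(\Gamma)$ since $(x_0,y)\in\Gamma$ gains mass and the two decremented pairs $(x_0,y_0),(x_1,y)$ lie outside $\Gamma$. If you replace your objective $\chi\mapsto\chi(z,y)$ by $\chi\mapsto\chi(\Gamma)$ and your local set $\{z\}$ by $A$, the argument closes cleanly without any appeal to dual saturation.
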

\begin{proof}  
By compactness, we can find an optimal coupling $\chi$ which, among all optimal couplings, maximizes   $\chi(\Gamma)$. Suppose for a contradiction that this ``doubly optimal'' coupling satisfies $\chi\left(\Gamma\right)< \pmin$. The set ${A}:=\{u\sim x\colon (u,y)\in \Gamma\}$ is not empty, since it contains the first vertex on a geodesic from $x$ to $y$. Thus, 
$\chi(A\times V) = P(x,A) \ge  \pmin>\chi(\Gamma)$.
This forces  $\chi((A\times V)\setminus\Gamma)>0$, i.e.
\begin{eqnarray}
\label{exists:ab}\exists (x_0,y_0)\in (A\times V)\setminus\Gamma,\quad \chi(x_0,y_0)  & \ge &  \varepsilon,
\end{eqnarray}
for some $\varepsilon>0$. On the other hand, we have
$
\chi(A\times\{y\}) + \chi(A^c\times\{y\})  =  P(y,y) \ \ge \ \frac{1}{2}.
$
This forces $\chi(A^c\times\{y\}) > 0$, because $\chi(A\times\{y\})\le \chi(\Gamma)<\pmin\le \frac{1}{2}$. In other words, 
\begin{eqnarray}
\label{exists:z}
\exists x_1\in A^c,\quad \chi(x_1,y) & \ge & \varepsilon,
\end{eqnarray}
provided $\varepsilon>0$ is chosen small enough. We now use the vertices $x_0,y_0,x_1$ found at (\ref{exists:ab})-(\ref{exists:z}) to construct a new coupling $\widetilde{\chi}$ which contradicts the optimality of $\chi$. For all $(u,v)\in V^2$, we set
\begin{eqnarray*}
\widetilde{\chi}(u,v) & := & \left\{
\begin{array}{ll}
\chi(u,v) & \textrm{ if }u\notin\{x_0,x_1\}\textrm{ and }b\notin\{y_0,y\};\\
\chi(u,v)-\varepsilon & \textrm{ if }(u,v)=(x_0,y_0)\textrm{ or }(u,v)=(x_1,y);\\
\chi(u,v)+\varepsilon & \textrm{ if }(u,v)=(x_0,y)\textrm{ or }(u,v)=(x_1,y_0).
\end{array}
\right.
\end{eqnarray*}
By construction, $\widetilde{\chi}$ is non-negative and has the same marginals as $\chi$. Thus, it is a coupling of $P(x,\cdot)$ and $P(y,\cdot)$. This coupling is moreover optimal, since
\begin{eqnarray*}
\sum_{u,v\in V}\dist(u,v)\left(\widetilde{\chi}(u,v)-\chi(u,v)\right) & = & \varepsilon\left(\dist(x_0,y)+\dist(x_1,y_0)-\dist(x_0,y_0)-\dist(x_1,y)\right)\\
& \le & \varepsilon\left(\dist(x,y)-1+\dist(x_1,y_0)-\dist(x,y)-\dist(x_1,y)\right)\\
& \le & 0,
\end{eqnarray*}
where we have successively used $x_0\in A$, $(x_0,y_0)\notin\Gamma$, and the triangle inequality  $\dist(x_1,y_0)\le \dist(x_1,y)+\dist(y,y_0)$. Finally, since $\Gamma$ contains $(x_1,y)$ but not $(x_0,y_0),(x_1,y)$, we have 
$
\widetilde{\chi}(\Gamma)  \ge  \chi(\Gamma)+\varepsilon,
$
contradicting the double optimality of $\chi$.  
\end{proof}

Our second ingredient is a diffusive hitting-time estimate for super-martingales. Results of this sort are standard, but usually require a uniform bound on the increments, a property which  fails in our directed setting ($\dist(x,x')=\dist(y,y')=1$ no longer implies $\dist(x',y')\le \dist(x,y)+2$).
\begin{lemma}[Hitting-time estimate]\label{lm:super}Let $(Z_t)_{t\ge 0}$ be a discrete-time, $\dN-$valued super-martingale with $Z_0=z_0\in\dN$, and set $\tau:=\min\{t\ge 0\colon Z_t= 0\}$. Assume that almost-surely, 
\begin{eqnarray*}
 \PP\left(Z_{t+1}\ne Z_t | \cF_t\right) & \ge & p {\bf 1}_{(\tau>t)},
\end{eqnarray*}
for all $t\ge 0$, where $(\cF_t)_{t\ge 0}$ denotes the underlying filtration. Then, for all $t\ge 1$, we have
\begin{eqnarray*}
\PP(\tau\ge t) & \le & z_0\sqrt{\frac{10}{pt}}.
\end{eqnarray*}
\end{lemma}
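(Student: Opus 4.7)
The strategy is to exploit the $\mathbb{N}$-valuedness of $Z$ to upgrade the jump-probability hypothesis into a lower bound on conditional jump energy, and then to run a second-moment martingale analysis. Since any non-zero integer-valued jump has square at least $1$, the hypothesis promotes to
\[
\EE\bigl[(Z_{s+1}-Z_s)^2 \bigm| \cF_s\bigr] \;\ge\; p\,\mathbf{1}_{\{\tau>s\}},
\]
which plays the role usually assumed by a conditional variance lower bound in Azuma/Freedman-type arguments.

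The main obstacle, as emphasised in the paragraph preceding the statement, is that the increments of $Z$ are not uniformly bounded from above, so one cannot invoke standard concentration directly. I would bypass this by a truncation. Setting $\sigma_K := \inf\{s\colon Z_s \ge K\}$ and applying optional stopping to the non-negative supermartingale $Z$ at the bounded time $\tau\wedge\sigma_K\wedge t$, Markov's inequality at $\sigma_K$ yields $\PP(\sigma_K \le t\wedge\tau) \le z_0/K$. It therefore suffices to control $\PP(\tau > t,\,\sigma_K > t)$. On the event $\{\sigma_K > t\}$ the process stays in $[0,K]$, and the Doob decomposition $Z_s = z_0 + M_s - A_s$ (with $M$ a martingale and $A$ predictable non-decreasing, $\EE[A_s]\le z_0$) now has a pointwise-controlled compensator on the relevant time interval. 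The energy bound then propagates to a quadratic-variation lower bound of the form $\langle M\rangle_{t\wedge\sigma_K\wedge\tau} \ge p\,(t\wedge\sigma_K\wedge\tau)$ modulo $a_s^2 := (A_{s+1}-A_s)^2$ corrections, which the truncation keeps under control.

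Combining this $L^2$ growth of the stopped martingale with the trivial pointwise inequality $Z_s \ge \mathbf{1}_{\{\tau>s\}}$ via Cauchy--Schwarz (or Markov applied to $(M_{t\wedge\sigma_K\wedge\tau})^2$) should yield $\PP(\tau>t,\,\sigma_K>t) \lesssim z_0/\sqrt{pt}$ with constants that depend mildly on $K$. Choosing $K$ of order $\sqrt{pt}$ to balance this against the $z_0/K$ truncation bias, and tracking the numerical factors carefully, should deliver the advertised bound $z_0\sqrt{10/(pt)}$. The hard part will be absorbing the $a_s^2$ corrections coming from the genuine supermartingale drift --- corrections that vanish in the standard martingale setting --- and doing so without degrading the diffusive $\sqrt{t}$ scaling or paying more than a bounded constant in the final estimate.
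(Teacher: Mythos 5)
Your strategy diverges genuinely from the paper's, and as written it has a gap that I do not think you have fully sized up. The paper's proof avoids second moments entirely: it applies the elementary inequality $e^{-x/2}\ge 1-\tfrac{x}{2}+\tfrac{1\wedge x^2}{10}$ to $x=\lambda(Z_{t+1}-Z_t)$, uses the supermartingale property on the linear term and the $\mathbb{N}$-valuedness on the $1\wedge x^2$ term to get $\EE[e^{-\tfrac{\lambda}{2}(Z_{t+1}-Z_t)}\mid\cF_t]\ge 1+\tfrac{p\lambda^2}{10}$ on $\{\tau>t\}$, iterates this to show $\EE[(1+\tfrac{p\lambda^2}{10})^{-t\wedge\tau}]\ge e^{-\lambda z_0/2}$, and concludes by Markov with $\lambda=\sqrt{10/(pt)}$. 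The crucial feature is that the transformed observable $e^{-\lambda Z_s/2}\in(0,1]$ is \emph{bounded}, so neither the unbounded upward jumps of $Z$ nor the unbounded compensator ever enter the estimate.

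Your plan, by contrast, insists on working with $Z$ (or its martingale part $M$) in $L^2$, and this is where it breaks. Two quantities that your argument needs but cannot control resurface precisely because the increments are not bounded above. First, after stopping at $\sigma_K$ the overshoot $Z_{\sigma_K}$ is unbounded: the bound $\PP(\sigma_K\le t\wedge\tau)\le z_0/K$ is fine, but any attempt to compare $\EE[M_{T}^2]=\EE[\langle M\rangle_{T}]$ with a deterministic quantity $K^2$ fails on the event $\{\sigma_K\le t\wedge\tau\}$, where $Z_{\sigma_K}$ (hence $M_{\sigma_K}$) can be astronomically large; there is no a priori control on $\EE[Z_{\sigma_K}^2]$. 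Second, and more fundamentally, even on the ``good'' event $\{\tau>t,\,\sigma_K>t\}$ one has $M_t=Z_t-z_0+A_t$, and while $Z_t<K$, the compensator $A_t$ is controlled only in $L^1$ (namely $\EE[A_{t\wedge\tau}]\le z_0$) and not in $L^2$, so $M_t^2\mathbf{1}_{\{\sigma_K>t,\tau>t\}}$ is \emph{not} bounded by a constant times $K^2$. Your remark that ``the hard part will be absorbing the $a_s^2$ corrections'' undersells the difficulty: the obstruction is not merely in the lower bound $\EE[\langle M\rangle_T]\ge pt\PP(G)-\sum a_s^2$, it is that the matching upper bound on $\EE[M_T^2]$ is simply unavailable from the hypotheses. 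A repair would require a second truncation on $Z$ itself (e.g.\ replacing $Z$ by $Z\wedge K$, which is still a non-negative $\mathbb{N}$-valued supermartingale with bounded increments before $\sigma_K$), at which point one is essentially reproving a bounded-increment estimate, paying constants along the way, and it is far from clear that the optimised result recovers the clean $\sqrt{10/(pt)}$. The paper's exponential-observable argument is both shorter and structurally robust against exactly the unboundedness you are trying to absorb by hand.
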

\begin{proof}Our starting point is the following easily verified  inequality: for all $x\in\dR$,
\begin{eqnarray*}
e^{-\frac{x}{2}} & \ge & 1-\frac{x}{2} + \frac{1 \wedge x^2}{10}.
\end{eqnarray*}
In particular, for all $t\in\dN$ and $\lambda\in[0,1]$, we have on the event $\{\tau>t\}$,
\begin{eqnarray*}
\EE\left[e^{-\frac{\lambda}{2}(Z_{t+1}-Z_t)}|\cF_t\right] & \ge  &\EE\left[\left.1-\frac{\lambda}{2}(Z_{t+1}-Z_t)+ \frac{1\wedge  (\lambda Z_{t+1}-\lambda Z_t)^2}{10}\right|\cF_t\right]\\
& \ge & 1+\frac{p\lambda^2}{10}.
\end{eqnarray*}
where the second line uses our assumptions on $Z$. It follows by induction that for all $t\in\dN$,
\begin{eqnarray*}
\EE\left[e^{-\frac{\lambda}{2} (Z_{t\wedge\tau}-Z_0)}\left(1+\frac{p\lambda^2}{10}\right)^{-t\wedge \tau}\right] & \ge & 1.
\end{eqnarray*}
Since $Z_{t\wedge\tau}-Z_0\ge -Z_0=-z_0$, we deduce that
\begin{eqnarray*}
\EE\left[\left(1+\frac{p\lambda^2}{10}\right)^{-t\wedge \tau}\right] & \ge & e^{-\frac{\lambda z_0}{2}} \ \ge \ 1-\frac{\lambda z_0}{2}.
\end{eqnarray*}
In particular, 
\begin{eqnarray*}
{\lambda z_0} & \ge & 2\EE\left[1-\left(1+\frac{p\lambda^2}{10}\right)^{-t\wedge \tau}\right]\\
& \ge & 2\left(1-\left(1+\frac{p\lambda^2}{10}\right)^{-t}\right)\PP(\tau\ge t)\\
& \ge &2\left(1-\left(1+\frac{p t\lambda^2}{10}\right)^{-1}\right)\PP(\tau\ge t).
\end{eqnarray*}
The result now readily follows by choosing $\lambda= \sqrt{10/pt}$. Note that this   choice satisfies $\lambda\in[0,1]$ only when $10/pt\le 1$, but the conclusion trivially holds when $10/pt> 1$.
\end{proof}
\begin{proof}[Proof of Theorem \ref{th:tvdecay}]For each $(x,y)\in V$, let $\chi_{x,y}$ denote an optimal coupling of $P(x,\cdot)$ and $P(y,\cdot)$ satisfying the condition in Lemma \ref{lm:coupling}, and let us define a transition matrix $K$ on $V^2$ by
\begin{eqnarray*}
K((x,y),(u,v)) & :=& \chi_{x,y}(u,v). 
\end{eqnarray*}
Finally, consider a Markov chain $(X_t,Y_t)_{t\ge 0}$ with transition matrix $K$, and let us use the notation  $\EE_{(x,y)}[\cdot]$ to indicate that $(X_0,Y_0)=(x,y)$. By construction, we have for all $x,y\in V$,
\begin{eqnarray*}
\EE_{(x,y)}[\dist(X_{1},Y_{1})] & = & W\left(P(x,\cdot),P(y,\cdot)\right)\  \le \ \dist(x,y);\\
\PP_{(x,y)}[\dist(X_1,Y_1)<\dist(x,y)] & \ge & \pmin{\bf 1}_{(x\ne y)}.
\end{eqnarray*}
By the Markov property, the first condition implies that the process $Z:=(Z_t)_{t\ge 0}$ defined by 
$
Z_t :=  \dist(X_t,Y_t)
$
is a super-martingale w.r.t. the natural filtration $(\cF_t)_{t\ge 0}$ of $(X_t,Y_t)_{t\ge 0}$, and the second  implies that $\PP(Z_{t+1}\ne Z_t|\cF_t)\ge\pmin{\bf 1}_{Z_t\ne 0}$. Thus, Lemma \ref{lm:super} applies and yields 
\begin{eqnarray*}
\PP_{x,y}(X_t\ne Y_t) & \le & \dist(x,y)\sqrt{\frac{10}{ (t+1)\pmin}}.
\end{eqnarray*}
The result follows, since under $\PP_{(x,y)}(\cdot)$, the pair $(X_t,Y_t)$ is a coupling of  $P^t(x,\cdot)$ and $P^t(y,\cdot)$. 
\end{proof}

\bibliographystyle{plain}
\bibliography{draft}

\begin{thebibliography}{10}

\bibitem{MR1262979}
Noga Alon and Yuval Roichman.
\newblock Random {C}ayley graphs and expanders.
\newblock {\em Random Structures Algorithms}, 5(2):271--284, 1994.

\bibitem{bauer2015li}
Frank Bauer, Paul Horn, Yong Lin, Gabor Lippner, Dan Mangoubi, Shing-Tung Yau,
  et~al.
\newblock Li-yau inequality on graphs.
\newblock {\em Journal of Differential Geometry}, 99(3):359--405, 2015.

\bibitem{MR3936154}
Nathana\"{e}l Berestycki and Bat\i \c{S}eng\"{u}l.
\newblock Cutoff for conjugacy-invariant random walks on the permutation group.
\newblock {\em Probab. Theory Related Fields}, 173(3-4):1197--1241, 2019.

\bibitem{MR2316551}
Magnus Bordewich and Martin Dyer.
\newblock Path coupling without contraction.
\newblock {\em J. Discrete Algorithms}, 5(2):280--292, 2007.

\bibitem{MR3439705}
Emmanuel Breuillard and Matthew C.~H. Tointon.
\newblock Nilprogressions and groups with moderate growth.
\newblock {\em Adv. Math.}, 289:1008--1055, 2016.

\bibitem{MR1254308}
P.~Diaconis and L.~Saloff-Coste.
\newblock Moderate growth and random walk on finite groups.
\newblock {\em Geom. Funct. Anal.}, 4(1):1--36, 1994.

\bibitem{MR1374011}
Persi Diaconis.
\newblock The cutoff phenomenon in finite {M}arkov chains.
\newblock {\em Proc. Nat. Acad. Sci. U.S.A.}, 93(4):1659--1664, 1996.

\bibitem{eidi2020ollivierJournal}
Marzieh Eidi and J{\"u}rgen Jost.
\newblock {Ollivier ricci curvature of directed hypergraphs}.
\newblock {\em Scientific Reports}, 10(1):1--14, 2020.

\bibitem{erbar2012ricci}
Matthias Erbar and Jan Maas.
\newblock Ricci curvature of finite {M}arkov chains via convexity of the
  entropy.
\newblock {\em Archive for Rational Mechanics and Analysis}, pages 1--42, 2012.

\bibitem{forman2003bochner}
Robin Forman.
\newblock Bochner's method for cell complexes and combinatorial ricci
  curvature.
\newblock {\em Discrete and Computational Geometry}, 29(3):323--374, 2003.

\bibitem{jost2017a}
J.~Jost.
\newblock {\em {Riemannian geometry and geometric analysis}}.
\newblock Springer, ${}^7$2017.

\bibitem{jost2021characterizations}
J{\"u}rgen Jost and Florentin M{\"u}nch.
\newblock Characterizations of forman curvature.
\newblock {\em arXiv preprint arXiv:2110.04554}, 2021.

\bibitem{MR3127886}
James~R. Lee and Yuval Peres.
\newblock Harmonic maps on amenable groups and a diffusive lower bound for
  random walks.
\newblock {\em Ann. Probab.}, 41(5):3392--3419, 2013.

\bibitem{MR3726904}
David~A. Levin and Yuval Peres.
\newblock {\em Markov chains and mixing times}.
\newblock American Mathematical Society, Providence, RI, 2017.
\newblock Second edition of [ MR2466937], With contributions by Elizabeth L.
  Wilmer, With a chapter on ``Coupling from the past'' by James G. Propp and
  David B. Wilson.

\bibitem{lin2015equivalent}
Yong Lin and Shuang Liu.
\newblock {Equivalent properties of CD inequality on graph}.
\newblock {\em arXiv preprint arXiv:1512.02677}, 2015.

\bibitem{lin2010ricci}
Yong Lin and Shing-Tung Yau.
\newblock {Ricci curvature and eigenvalue estimate on locally finite graphs}.
\newblock {\em Mathematical research letters}, 17(2):343--356, 2010.

\bibitem{MR2341319}
Ravi Montenegro and Prasad Tetali.
\newblock Mathematical aspects of mixing times in {M}arkov chains.
\newblock {\em Found. Trends Theor. Comput. Sci.}, 1(3):x+121, 2006.

\bibitem{munch2019li}
Florentin M{\"u}nch.
\newblock {Li-Yau inequality under $ CD (0, n) $ on graphs}.
\newblock {\em arXiv preprint arXiv:1909.10242}, 2019.

\bibitem{munch2019non}
Florentin M{\"u}nch.
\newblock {Non-negative Ollivier curvature on graphs, reverse Poincar{\'e}
  inequality, Buser inequality, Liouville property, Harnack inequality and
  eigenvalue estimates}.
\newblock {\em arXiv preprint arXiv:1907.13514}, 2019.

\bibitem{munch2017ollivier}
Florentin M{\"u}nch and Rados{\l}aw~K Wojciechowski.
\newblock {Ollivier Ricci curvature for general graph Laplacians: Heat
  equation, Laplacian comparison, non-explosion and diameter bounds}.
\newblock {\em Advances in Mathematics}, 356:106759, 2019.

\bibitem{najman2017modern}
Laurent Najman and Pascal Romon.
\newblock Modern approaches to discrete curvature.
\newblock {\em Lecture Notes in Mathematics, Springer, Berlin}, 2017.

\bibitem{ollivier2010survey}
Yann Ollivier.
\newblock {\em A survey of {R}icci curvature for metric spaces and {M}arkov
  chains}, volume~57 of {\em Adv. Stud. Pure Math.}
\newblock Math. Soc. Japan, Tokyo.

\bibitem{ollivier2009ricci}
Yann Ollivier.
\newblock Ricci curvature of {M}arkov chains on metric spaces.
\newblock {\em Journal of Functional Analysis}, 256(3):810--864, 2009.

\bibitem{ozawa2020geometric}
Ryunosuke Ozawa, Yohei Sakurai, and Taiki Yamada.
\newblock {Geometric and spectral properties of directed graphs under a lower
  Ricci curvature bound}.
\newblock {\em Calculus of Variations and Partial Differential Equations},
  59(4):1--39, 2020.

\bibitem{ozawa2022heat}
Ryunosuke Ozawa, Yohei Sakurai, and Taiki Yamada.
\newblock {Heat flow and concentration of measure on directed graphs with a
  lower Ricci curvature bound}.
\newblock {\em Potential Analysis}, pages 1--15, 2022.

\bibitem{salez2021cutoff}
Justin Salez.
\newblock Cutoff for non-negatively curved {M}arkov chains.
\newblock {\em arXiv preprint arXiv:2102.05597}, 2021.

\bibitem{salez2021sparse}
Justin Salez.
\newblock Sparse expanders have negative curvature.
\newblock {\em arXiv preprint arXiv:2101.08242}, 2021.

\bibitem{schmuckenschlager1998curvature}
Michael Schmuckenschl{\"a}ger.
\newblock Curvature of nonlocal {M}arkov generators.
\newblock {\em Convex geometric analysis (Berkeley, CA, 1996)}, 34:189--197,
  1998.

\bibitem{MR4253426}
Romain Tessera and Matthew C.~H. Tointon.
\newblock A finitary structure theorem for vertex-transitive graphs of
  polynomial growth.
\newblock {\em Combinatorica}, 41(2):263--298, 2021.

\bibitem{villani2009optimal}
C{\'e}dric Villani.
\newblock {\em Optimal transport: old and new}, volume 338.
\newblock Springer, 2009.

\bibitem{yamada2016ricci}
Taiki Yamada.
\newblock {The Ricci curvature on directed graphs}.
\newblock {\em arXiv preprint arXiv:1602.07779}, 2016.

\end{thebibliography}
\end{document}